\documentclass[11pt]{article}

\usepackage{caption}
\DeclareCaptionLabelSeparator{twospace}{. }
\usepackage{indentfirst}
\usepackage{mathrsfs}
\usepackage{enumerate}
\usepackage[a4paper,margin=1in]{geometry}  
\usepackage[ruled,linesnumbered,vlined]{algorithm2e}
\usepackage{amsmath,amsthm,tikz,float}

\usepackage[colorlinks=true,citecolor=black,linkcolor=black,urlcolor=blue]{hyperref}
\usepackage{mathrsfs}
\usepackage{amssymb}

\usepackage{amsfonts}
\usepackage[shortlabels]{enumitem}
\setenumerate[1]{itemsep=0pt,partopsep=0pt,parsep=\parskip,topsep=0pt}
\usepackage{amsmath,amsthm,tikz,float,amssymb}
\usepackage[T1]{fontenc}
\usepackage[utf8]{inputenc}
\usepackage{authblk}

\usepackage{setspace}

\newtheorem{theorem}{Theorem}[section]
\newtheorem{lemma}[theorem]{Lemma}
\newtheorem{definition}[theorem]{Definition}

\newtheorem{problem}[theorem]{Problem}
\newtheorem{corollary}[theorem]{Corollary}
\newtheorem*{remark}{Remark}

\newenvironment{MainThmproof1}
{\begin{proof}[{Proof of Theorem \ref{MainResult} }]}
{\end{proof}}

\newenvironment{MainThmproof2}
{\begin{proof}[{Proof of Theorem \ref{MainResult} }]}
{\end{proof}}

\tikzset{
  module/.style={draw,rounded corners,minimum width=1.3cm,minimum height=1cm},
  terminal/.style={circle,draw,fill=black,inner sep=2pt},
  edgelight/.style={gray!70}
}

\begin{document}


\title{On regular homogeneously traceable nonhamiltonian graphs}
\author{Hangdi Chen$^{1,}$\thanks{chenhangdi188@126.com} ,  Yaojun Chen$^{2,}$\thanks{yaojunc@nju.edu.cn} \\
    {\small $^1$Fujian Key Laboratory of Financial Information Processing, Putian University, }\\ 
    {\small Putian, 351100, P.R. China}\\
    {\small $^2$School of Mathematics, Nanjing University, Nanjing, 210093, P.R. China}
	
}
\date{}
\maketitle

\begin{abstract}
A graph is homogeneously traceable if each vertex is an endpoint of a Hamiltonian path. Chartrand, Gould, and Kapoor (1979) proved irregular homogeneously traceable nonhamiltonian graphs exist for every order $n\ge 9$. Hu and Zhan (DAM, 2022) considered the $3$-regular and $4$-regular cases and asked which order $n$ can be realized by a $k$-regular homogeneously traceable nonhamiltonian graph. Recently, Liu and Qiao (DAM, 2026) showed that $n=p(k-1)+q\ge 6(k-1)+q$ can be realized if $k\ge 5$ is odd and $q\in\{0,2,4,6\}$, or $k\ge 6$ is even and $q\in\{0,1,...,6\}$. In this paper, we show that for any $k\ge 6$ and $n\ge 6(k-2)$, there exists a $k$-regular homogeneously traceable nonhamiltonian graph of order $n$.


\end{abstract}

{\bf Keywords:} Homogeneously traceable, Regular graphs.


\section{Introduction}

In this paper, all graphs considered are finite and  simple. For a graph $G$, let $V(G)$ be the \emph{vertex set} of $G$ and  $E(G)$ be the \emph{edge set} of $G$.  The number of vertices of $G$ is called the \emph{order} of $G$, and is denoted by $v(G)=|V(G)|$. The number of edges of $G$ is called the \emph{size} of $G$, and is denoted by $e(G)=|E(G)|$.  Let $N_G(v)$ denote the neighborhood of $v$ consisting of all vertices adjacent to $v$. The \emph{degree} of a vertex $v$, denoted by $d_G(v)$, is the size of $N_G(v)$, and the \emph{minimum degree} of $G$ is denoted by $\delta (G)$.  A graph $G$ is \emph{$k$-regular} if every vertex of $G$ has degree $k$. When no confusion can occur, we will omit the subscript $G$.

A \emph{Hamilton path} (\emph{cycle}) of a graph $G$ is a path (cycle) containing every vertex of $G$ exactly once. A graph $G$ is \emph{traceable} (\emph{Hamiltonian}) if it has a Hamilton path (cycle). A graph $G$ is \emph{nonhamiltonian} if it has no Hamilton cycle. The following concept is introduced by Skupie$\acute{n}$  \cite{Sku1980,Sku1984} in 1975.

\begin{definition}
\emph{A graph $G$ is said to be} homogeneously traceable \emph{if every vertex of $G$ is an endpoint of a Hamilton path. }
\end{definition}

Chartrand, Gould and Kapoor \cite{Chart1979} showed that every homogeneously traceable graph of order $n$ is hamiltonian for each integer $n$ satisfying $3\le n\le 8$, whereas for every $n\ge 9$ there exists a homogeneously traceable graph of order $n$ that is nonhamiltonian. This result was later rediscovered in \cite{Carl2021}, where ``homogeneously traceable'' was called ``fully strung''. Apart from the Petersen graph, the homogeneously traceable nonhamiltonian graphs constructed in \cite{Carl2021,Chart1979} are all irregular. This naturally leads to the question of whether regular homogeneously traceable nonhamiltonian graphs exist.

In 2022, Hu and Zhan \cite{Hu2022Zhan} proved that, for every even integer $n\ge 10$, there exists a $3$-regular homogeneously traceable nonhamiltonian graph of order $n$. They also proved that, for every integer $n\ge 18$, there exists a $4$-regular homogeneously traceable nonhamiltonian graph of order $n$. In the same paper, they posed the following problem.

\begin{problem}\textup{(Hu and Zhan \cite{Hu2022Zhan})}.
Given an integer $k\ge 4$, determine the integers $n$ such that there exists a $k$-regular homogeneously traceable nonhamiltonian graph of order $n$.
\end{problem}

Recently, Liu and Qiao \cite{Liu2026Qiao} constructed many such graphs for all larger degrees. And they \cite{Liu2026Qiao} proved the following theorem.

\begin{theorem}\label{LiuThm2026}
\textup{(Liu and Qiao \cite{Liu2026Qiao})}.  The following statements hold.

(i) For every odd integer $k\ge 5$, integer $p\ge 6$ and integer $q\in\{0,2,4,6\}$, there exists a $k$-regular homogeneously traceable nonhamiltonian graph of order $p(k-1)+q$. 

(ii) For every even integer $k\ge 6$, integer $p\ge 6$ and integer $q\in\{0,1,\ldots,6\}$, there exists a $k$-regular homogeneously traceable nonhamiltonian graph of order $p(k-1)+q$. 
\end{theorem}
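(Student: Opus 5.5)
Theorem~\ref{LiuThm2026} is a cited result of Liu and Qiao, so the plan is to reconstruct a proof in the spirit of their construction. The idea is to glue together $p\ge 6$ dense blocks, each built from a near-complete graph on about $k-1$ vertices, along a sparse ``skeleton'' graph that is itself homogeneously traceable but nonhamiltonian (for instance a Petersen-like cubic pattern, or a cycle-like arrangement with one defect).

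\textbf{Gadget.} First I fix a gadget $B$: take $K_{k-1}$ or $K_k$, delete a small matching, and designate a few terminal vertices of degree $k-1$ (or $k-2$) that will receive the external edges. The gadget should satisfy a \emph{pass-through lemma}: for any two distinct terminals $x,y$, and for any terminal $x$ together with any internal vertex $z$, there is a Hamilton path of $B$ between them. This is routine, since $B$ is almost complete and Dirac/Ore-type arguments give Hamilton-connectedness.

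\textbf{Assembly.} Next I arrange the $p$ gadgets so that each has exactly two external edges (a ``2-terminal'' gadget) or three external edges, following a skeleton graph $H$. If $H$ is a cubic homogeneously traceable nonhamiltonian graph, such as the Petersen graph or the graphs of Hu and Zhan, then:
\begin{itemize}
\item a Hamilton cycle of $G$ would traverse each gadget entering and leaving through distinct external edges, so it contracts to a Hamilton cycle of $H$, which does not exist (nonhamiltonicity);
\item a Hamilton path of $H$ starting at the vertex of $H$ that contains a prescribed vertex $v$ of $G$ lifts, via the pass-through lemma, to a Hamilton path of $G$ starting at $v$ (homogeneous traceability).
\end{itemize}
One subtlety in the lifting: a gadget containing an endpoint of the skeleton path is entered only once, so I need Hamilton paths from a terminal to every vertex. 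The pass-through lemma already provides this.

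\textbf{Regularity and orders.} Degrees are made exactly $k$ by choosing how many edges are deleted inside each gadget to compensate for the external edges. To get order $p(k-1)+q$, I enlarge $q$ of the gadgets (or some extra slots) by one vertex each, or insert small extra blocks. Parity is the constraint here: $k$-regularity forces $kn$ to be even. For odd $k$ this is why only even $q$ are available, and the deletions inside an enlarged gadget must be rebalanced, adding two vertices at a time. For even $k$, any $q\le 6$ works by adding one vertex to up to six gadgets.

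\textbf{Main obstacle.} The hardest step is guaranteeing that the contraction argument for nonhamiltonicity stays valid. A Hamilton cycle might enter and leave one gadget several times, and this is impossible only if each gadget has at most three external edges and the skeleton has the right cut structure. So the delicate work is to keep every gadget's edge boundary at most $3$ while still absorbing the degree deficits created by regularity, especially for the enlarged gadgets. I would first try to keep all boundaries exactly $3$ with a cubic skeleton on $p$ vertices (for even $p$), and handle odd $p$ by a modified skeleton containing one degree-2 gadget.
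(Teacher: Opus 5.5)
There is a genuine gap in the assembly step. It is a counting problem, not a technicality.

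In a $k$-regular graph, every vertex of a set $X$ with $|X|\le k$ has at least $k+1-|X|$ neighbors outside $X$. Hence $|\partial(X)|\ge |X|(k+1-|X|)\ge k\ge 5$. Your gadgets ($K_{k-1}$ or $K_k$ minus a matching) therefore cannot have only ``a few terminals''. Every vertex in them has internal degree at most $k-1$ and needs an external edge, so the boundary is at least $k$, and the contraction argument for nonhamiltonicity is lost.

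A gadget with at most three external edges needs at least $k+1$ vertices. So $p$ such gadgets already give order at least $p(k+1)=p(k-1)+2p\ge p(k-1)+12$. This overshoots every target $p(k-1)+q$ with $q\le 6$.

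Parity makes things worse. Since $|\partial(X)|=k|X|-2e(X)$, every edge cut has even size when $k$ is even. So for even $k$, gadgets with exactly three external edges do not exist. The only admissible gadgets have two external edges, and a skeleton whose nodes all have degree $2$ is a cycle, which obstructs nothing. Part (ii) is thus out of reach of this mechanism altogether.

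For odd $k$, boundary $3$ forces $|X|$ odd, hence $|X|\ge k+2$. A cubic homogeneously traceable nonhamiltonian skeleton has at least $10$ vertices, so the order is at least $10(k+2)>6(k-1)+6$. Orders near $6(k-1)$ are therefore unreachable too.

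Your lifting also needs something you did not state. Each skeleton vertex must have Hamilton paths starting there with two different first edges. Otherwise, if the first skeleton edge is the external edge at $v$ itself, the lift would need a Hamilton path of the gadget from $v$ back to $v$.

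The missing idea is to get the obstruction from small \emph{vertex} cuts that carry large edge cuts. In the construction this paper builds on (Liu and Qiao's), each internal graph $Q_i$ hangs between only two vertices $u_i,u_i'$ of a triangular prism. It is joined to them by many edges ($2(k-2)$ for $W_k$). A Hamilton cycle must cross each $Q_i$ as a single spanning $(u_i,u_i')$-path. That would give a Hamilton cycle of the prism through all three rungs $u_iu_i'$, which is impossible (Lemma~\ref{NHLem}).

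The rest of that construction goes as follows:
\begin{itemize}
\item The base graph $G_k$ has exactly $6(k-1)$ vertices.
\item The SLC property of the internal graphs gives double homogeneous traceability (Lemma~\ref{DHTLem}).
\item The increments of $k-1$ come from the blow-up lemma (Lemma~\ref{BlowLem}). Replacing a degree-$k$ vertex inside an internal graph by $K_k$ adds $k-1$ vertices. It preserves $k$-regularity, double homogeneous traceability and the triangular terminal structure.
\item The small residues $q$ come from modifying the internal graphs themselves (e.g.\ $W_k'$ is one vertex larger than $W_k$). This paper does the same with $D_{k,r}$, $F_{k,r}$, $S^-(k)$, $S_1(k)$ and $S_2(k)$.
\end{itemize}
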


\begin{remark}
\emph{For every odd integer $k\ge 11$, the only even residue classes not covered by Theorem \ref{LiuThm2026} are $q=8,10,\ldots,k-3$. For every even integer $k\ge 10$, the only residue classes not covered by Theorem \ref{LiuThm2026} are $q=7,8,\ldots,k-2$. As an immediate consequence of Theorem \ref{LiuThm2026}, we obtain the following corollary.}
\end{remark}

\begin{corollary}\label{LiuCorol2026}
\textup{(Liu and Qiao \cite{Liu2026Qiao})}. The following statements hold.

(i) For every odd  $k\in \{5,7,9\}$ and every even $n\ge 6(k-1)$, there exists a $k$-regular homogeneously traceable nonhamiltonian graph of order $n$. 

(ii) For every even $k\in\{6,8\}$ and every $n\ge 6(k-1)$, there exists a $k$-regular homogeneously traceable nonhamiltonian graph of order $n$.
\end{corollary}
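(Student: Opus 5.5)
The corollary follows from Theorem \ref{LiuThm2026} by a residue-class argument. Write $n=p(k-1)+q$ with $p\ge 6$ and $q$ in a suitable range. Since $n\ge 6(k-1)$, the division algorithm gives $n=p(k-1)+r$ with $p=\lfloor n/(k-1)\rfloor\ge 6$ and $0\le r\le k-2$. The task is to check that every admissible $r$ lies in the set of $q$ allowed by Theorem \ref{LiuThm2026}.

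For part (ii), let $k\in\{6,8\}$. Then $r\le k-2\le 6$, so $r\in\{0,1,\ldots,6\}$. Part (ii) of Theorem \ref{LiuThm2026} applied with $p$ and $q=r$ therefore produces the required $k$-regular homogeneously traceable nonhamiltonian graph of order $n$.

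For part (i), let $k\in\{5,7,9\}$ be odd. Then $k-1$ is even, so $n$ and $r$ have the same parity. Since $n$ is even, $r$ is even, and $r\le k-2\le 7$ forces $r\le 6$. Hence $r\in\{0,2,4,6\}$, and part (i) of Theorem \ref{LiuThm2026} with $q=r$ and the same $p\ge 6$ gives the graph.

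The only point needing care is the bound $p\ge 6$. It holds because $n\ge 6(k-1)$ implies $\lfloor n/(k-1)\rfloor\ge 6$. The rest is bookkeeping.
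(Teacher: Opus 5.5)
Your proposal is correct and matches the paper, which states the corollary as an immediate consequence of Theorem \ref{LiuThm2026} without a written proof, with the preceding Remark indicating the same residue-class bookkeeping. You make explicit the points the paper leaves implicit: that $p=\lfloor n/(k-1)\rfloor\ge 6$, and, for odd $k\le 9$, that the remainder is even with $r\le k-2\le 7$, hence at most $6$.
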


In this paper, we settle a more general situation: $k\ge 6$ and $n\ge 6(k-2)$.  Our main result is stated as follows.

\begin{theorem}\label{MainResult} For any $k\ge 6$ and $n\ge 6(k-2)$, there exists a $k$-regular homogeneously traceable nonhamiltonian graph of order $n$.




\end{theorem}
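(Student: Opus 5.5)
The plan is a direct construction that glues together six or more ``blocks'' built from complete graphs. Nonhamiltonicity will come from a toughness-type obstruction: deleting a small set $S$ leaves more than $|S|$ components. Homogeneous traceability will come from explicit path-routing through the blocks.

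\textbf{Step 1 (the block).} The basic gadget is a graph $B_m$ obtained from $K_{m}$, with $m\ge k-2$ (usually $m=k-1$ or $m=k$), by deleting a small matching. This leaves a few designated \emph{ports} of degree $k-1$ or $k-2$, while every internal vertex has degree exactly $k$ after the ports receive their outside edges. Since $B_m$ is nearly complete, for any two distinct ports $x,y$ and any vertex $z$ there is a Hamilton path of $B_m$ from $x$ to $y$, and also one from $z$ to $x$. This is the flexibility needed for homogeneous traceability. Block sizes can be chosen from $\{k-2,\dots,2k-3\}$ or so. For the order $n=\sum_i m_i$ to realize every $n\ge 6(k-2)$, it suffices to use $p\ge 6$ blocks whose sizes range over an interval of length at least $k-2$. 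That covers all residues, and the requirement $n\ge 6(k-2)$ is exactly six blocks of minimum size $k-2$. A block of size $k-2$ cannot make its vertices $k$-regular on its own, so those vertices receive two edges to hub vertices.

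\textbf{Step 2 (global skeleton).} Take a small hub set $S$, for instance two vertices $u,v$ or a triangle-like structure as in the Petersen/Hu--Zhan constructions. Attach $p\ge 6$ blocks so that $G-S$ has at least $|S|+1$ components, possibly after also deleting single cut-edges between blocks. Then $G$ is not $1$-tough, hence nonhamiltonian. The degrees of the hub vertices must equal $k$. This constrains how many ports each block sends to $S$, and I would balance it by letting some blocks send edges to each other in a ring instead of to $S$. Parity constraints from the degree sum ($kn$ even) then force a case split on the parities of $k$ and $n$. When $k$ is odd and $n$ is odd, no graph exists, so the theorem implicitly needs $kn$ even. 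I would handle this by treating odd $k$ with even $n$ only, or by noting that the statement must be read that way.

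\textbf{Step 3 (traceability).} For each vertex $w$, either in a block or in $S$, exhibit a Hamilton path starting at $w$. Start in $w$'s block and exit through a port. Then traverse the remaining blocks in sequence, using hub vertices as connectors between consecutive blocks. Since there are $|S|+1$ components and a path may visit them in alternation with the $|S|$ hub vertices, a path ending inside a block is possible even though a cycle is not. Block symmetry reduces this to a constant number of cases: $w$ internal, $w$ a port, or $w$ a hub.

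\textbf{Main obstacle.} I expect the hardest part to be making the degrees exactly $k$ for every block size in the required range simultaneously with the toughness obstruction. Blocks much smaller than $k$ need many external edges, and blocks larger than $k$ need many deleted internal edges. I would first settle sizes $m\in\{k-2,\dots,2k-4\}$, using $K_m$ minus a suitable $(m-1-k+t)$-regular subgraph, where $t$ is the number of external edges per vertex. I would choose $t\in\{0,1,2\}$ per vertex so that the ports are arranged consistently, and then verify the small number of path-routing cases.
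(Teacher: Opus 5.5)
There is a genuine gap: your nonhamiltonicity mechanism rules out homogeneous traceability. Your parity remark is correct. For odd $k$ only even $n$ can occur, and the paper's odd-degree constructions indeed produce only even orders, so the statement must be read with that restriction.

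\textbf{The gap.} Suppose $S\neq\emptyset$ and $G-S$ has at least $|S|+1$ components. Take $s\in S$ and a Hamilton path $P$ with endpoint $s$. Then $P-s$ is a path. Deleting the other $|S|-1$ vertices of $S$ splits it into at most $|S|$ subpaths, each lying inside one component of $G-S$. Since $P$ covers every component, $G-S$ has at most $|S|$ components, a contradiction. So no vertex of $S$ is an endpoint of any Hamilton path; every homogeneously traceable graph is $1$-tough. Your own Step 3 remark, that a Hamilton path alternates between the $|S|+1$ components and the $|S|$ hubs, says exactly this: hubs are always interior vertices, so the case ``$w$ is a hub'' can never be completed. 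Counting deleted cut-edges $F$ as well does not help. A path starting at a vertex of $S$ is broken by the remaining $|S|-1$ vertices and $|F|$ edges into at most $|S|+|F|$ pieces, the same bound the count uses against a Hamilton cycle. Thus Steps 2 and 3 are mutually exclusive, and the block-size and degree bookkeeping has nothing to rest on.

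\textbf{What the paper does instead.} What is missing is a nonhamiltonicity certificate compatible with $1$-toughness. The paper uses the triangular terminal graph: triangles $u_1u_2u_3$ and $u_1'u_2'u_3'$, with each internal graph $Q_i$ joined to the rest only through $u_i$ and $u_i'$. A Hamilton cycle must use exactly one edge between $Q_i$ and $u_i$ and exactly one between $Q_i$ and $u_i'$. It therefore contracts to a Hamilton cycle of the triangular prism containing all three rungs $u_iu_i'$, which would require a perfect matching of a triangle (Lemma~\ref{NHLem}). Here $G-\{u_i,u_i'\}$ has only two components, so there is no toughness violation, and every vertex, including the $u_i$, can still be a path endpoint. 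That endpoint property comes from the SLC property of the internal graphs via Lemma~\ref{DHTLem}. The required orders are then obtained as follows:
\begin{itemize}
\item choose internal graphs of orders $2k-6$, $2k-5$ or $2k-4$, together with the larger graphs $D_{k,r}$ and $F_{k,r}$ for the residues not otherwise covered;
\item apply the blow-up lemma, which adds $k-1$ vertices at a time while preserving regularity, double homogeneous traceability and nonhamiltonicity.
\end{itemize}
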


\section{Preliminaries}
In this section, we introduce some notation and auxiliary results that are used in the proof of Theorem \ref{MainResult}. Let $G$ be a graph. The \emph{complement} $\overline{G}$ of $G$ is the graph with vertex set $V(G)$ in which two distinct vertices are adjacent if and only if they are nonadjacent in $G$. For two disjoint subsets $A$ and $B$ of $V(G)$, let $G[A]$ denote the subgraph of $G$ induced by $A$, and let $G[A,B]$ denote the bipartite subgraph with vertex set $A\cup B$ and edge set
\[
\{ab\in E(G): a\in A,\ b\in B\}.
\]
For a vertex $v\in V(G)$, we write $G-v$ for the subgraph induced by $V(G)\setminus\{v\}$. A \emph{$v$-path} in $G$ is a path having $v$ as an endpoint. Let $P$ and $Q$ be two paths in $G$. If $u$ and $v$ are the endpoints of $P$, and $v$ and $t$ are the endpoints of $Q$, then we write $uPv$ and $vQt$ to indicate the corresponding orientations of $P$ and $Q$, respectively. We denote by $uPvQt$ the sequence obtained by concatenating $P$ and $Q$ at $v$; in particular, if $P$ and $Q$ meet only at $v$, then $uPvQt$ is a path.

\begin{definition}
\emph{A graph $G$ is said to be }doubly homogeneously traceable \emph{if for any vertex $v\in V(G)$, there are two Hamilton $v$-paths $P$ and $Q$ such that the two edges incident with $v$ on $P$ and $Q$ are distinct.}
\end{definition}

We use the blow-up operation introduced in \cite{Hu2022Zhan,Liu2026Qiao}. 
Let $K_n$ denote the \emph{complete graph} of order $n$. For a vertex $v$ of a graph
$G$, set $t=d_G(v)$. The operation of \emph{blowing up $v$ into $K_t$} consists
of replacing $v$ with a copy of $K_t$ and adding the edges of a matching between
this copy of $K_t$ and the neighborhood $N_G(v)$. The blow-up of a vertex of
degree $3$ into $K_3$ is illustrated in Figure \ref{Blowexam}. Liu and Qiao
\cite{Liu2026Qiao} proved the following blow-up lemma.

\begin{figure}[H]
\begin{minipage}[t]{0.5\textwidth}
    \centering
    \begin{tikzpicture}
        \coordinate (a1) at (5,6.5);
        \coordinate (a2) at (5,5.5);
		\coordinate (a3) at (4,4.5);
		\coordinate (a4) at (6,4.5);

        \foreach \i in {1,2,3,4}
        {
            
            \fill (a\i) circle[radius=3pt];
        }
    
        \draw  (a4)--(a2) -- (a3) ;
       \draw  (a1)--(a2) ;

                  \node at (4.6,5.5) {$v$};
               \node at (5,6.8) {$u_1$};
 \node at (3.6,4.5) {$u_2$};
 \node at (6.4,4.5) {$u_3$};       
    \end{tikzpicture}
\end{minipage}
\begin{minipage}[t]{0.4\textwidth}
    \centering
\begin{tikzpicture}
        \coordinate (a1) at (5,6.5);
        \coordinate (a2) at (5,5.75);
		\coordinate (a3) at (4,4.5);
		\coordinate (a4) at (6,4.5);
\coordinate (a5) at (4.5,5);
\coordinate (a6) at (5.5,5);

        \foreach \i in {1,2,3,4,5,6}
        {
            
            \fill (a\i) circle[radius=3pt];
        }
    
        \draw  (a3)--(a5) ;
       \draw  (a1)--(a2) ;
       \draw  (a4)--(a6) ;
  \draw  (a2)--(a5) --(a6) --(a2) ;

               \node at (5,6.8) {$u_1$};
 \node at (3.6,4.5) {$u_2$};
 \node at (6.4,4.5) {$u_3$};  

 \node at (4.7,5.75) {$v_1$};
 \node at (4.2,5) {$v_2$};
 \node at (5.8,5) {$v_3$};    
    \end{tikzpicture}
\end{minipage}
  \caption{Blowing up $v$ into $K_3$.}\label{Blowexam}
\end{figure}

\begin{lemma}\label{BlowLem}
\textup{(Blow-up lemma \cite{Liu2026Qiao})}. Let $v$ be a vertex of degree $t$  $(t\ge3)$ in a doubly homogeneously traceable graph $G$. Suppose $H$ is the graph obtained from $G$ by blowing up $v$ into $K_t$. Then $H$ is also doubly homogeneously traceable.
\end{lemma}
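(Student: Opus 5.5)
The plan is to convert Hamilton paths of $G$ into Hamilton paths of $H$ by replacing $v$ with a suitable Hamilton path through the clique. Fix notation: $N_G(v)=\{u_1,\dots,u_t\}$, the new clique is $\{v_1,\dots,v_t\}$, and the matching edges are $v_ju_j$. The basic conversion is as follows. Let $R$ be a Hamilton path of $G$.

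\begin{itemize}
\item If $v$ is an interior vertex of $R$, it appears as $\dots u_j v u_l\dots$ with $j\ne l$. We replace $v$ by $u_jv_j\,S\,v_lu_l$, where $S$ is any Hamilton path of the clique from $v_j$ to $v_l$.
\item If $v$ is the last vertex of $R$, preceded by $u_j$, we replace the end $u_jv$ by $u_jv_j$ followed by a Hamilton path of the clique starting at $v_j$.
\end{itemize}

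In both cases we obtain a Hamilton path of $H$ with the same endpoint at the far end. We then treat vertices of $G-v$ and clique vertices $v_i$ separately.

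\emph{Case 1: $y\in V(G)\setminus\{v\}$.} By hypothesis there are Hamilton $y$-paths $R_1,R_2$ of $G$ whose first edges at $y$ are distinct. Convert both as above; $y$ stays an endpoint.
\begin{itemize}
\item If neither first edge is of the form $yv$, the first edges are unchanged by the conversion.
\item If one first edge is $yv$, then $y=u_i$ for some $i$, and that first edge becomes $u_iv_i$. The other first edge is some $u_iw$ with $w\ne v$, which is unchanged.
\end{itemize}
In either case the two resulting Hamilton $y$-paths of $H$ have distinct first edges.

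\emph{Case 2: $x=v_i$.} Here we produce one Hamilton $v_i$-path whose first edge lies inside the clique and one whose first edge is $v_iu_i$.

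For the first path, take the two Hamilton $v$-paths $P,Q$ of $G$, with first edges $vu_a$ and $vu_b$, where $a\ne b$. At least one of $a,b$, say $a$, differs from $i$. Write $P=vu_a\cdots w$. The path $v_i\,S\,v_a u_a\cdots w$, where $S$ is a Hamilton path of the clique from $v_i$ to $v_a$, is a Hamilton $v_i$-path of $H$ whose first edge lies in the clique.

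For the second path, use the double homogeneous traceability at $u_i$. It gives a Hamilton $u_i$-path $R$ of $G$ whose first edge is not $u_iv$. Then $u_i$ is an endpoint of $R$ and $v$ is not adjacent to $u_i$ on $R$. Prepend $v_i$ to $R$.
\begin{itemize}
\item If $v$ is interior, it appears as $u_jvu_l$ with $j,l\ne i$. Replace $v$ by a Hamilton path of $K_t-v_i$ from $v_j$ to $v_l$, which exists since $t-1\ge 2$.
\item If $v$ is the last vertex, preceded by $u_j$ with $j\ne i$, finish with $v_j$ followed by a Hamilton path of $K_t-v_i$ starting at $v_j$.
\end{itemize}
Either way we get a Hamilton $v_i$-path of $H$ with first edge $v_iu_i$, distinct from the clique edge of the first path.

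The main obstacle is this second path for $v_i$. The naive idea of varying the Hamilton path through the clique fails when $t=3$ and $a=i$, because then the route through the clique is forced. The resolution is to invoke the hypothesis at the neighbour $u_i$ rather than at $v$. Choosing the Hamilton $u_i$-path that avoids the edge $u_iv$ is exactly what guarantees $j,l\ne i$, so that $v_i$ can be removed from the clique and placed at the front.
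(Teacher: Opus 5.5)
Your proof is correct and complete. The paper gives no proof of this lemma itself (it is quoted from Liu and Qiao), so there is no in-paper argument to compare yours against. I checked every case of your self-contained verification and each one holds:
\begin{itemize}
\item For vertices of $V(G)\setminus\{v\}$, you replace $v$ by a Hamilton path of the clique. The first edge at $y$ is either unchanged or turns from $u_iv$ into $u_iv_i$, so two distinct first edges remain distinct.
\item For a clique vertex $v_i$, one path is $v_iSv_au_a\cdots$, built from a Hamilton $v$-path with $a\neq i$. The other is $v_iu_i\cdots$, built from a Hamilton $u_i$-path whose first edge is not $u_iv$. That choice forces $v$'s path-neighbours to be $u_j,u_l$ with $j,l\neq i$, so $v_i$ can be removed from the clique and placed at the front.
\end{itemize}
Both uses of the ``doubly'' hypothesis are exactly what the argument needs: at $v$ to secure $a\neq i$, and at $u_i$ to avoid the edge $u_iv$. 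Your remark on why the second path cannot come from the hypothesis at $v$ alone when $t=3$ is also accurate.
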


A graph $G$ is said to be \emph{Hamilton-connected} if, for every pair of
distinct vertices $u,v\in V(G)$, there exists a Hamilton path in $G$ with
endpoints $u$ and $v$. We shall repeatedly use the following well-known theorem
of Erd\H{o}s and Gallai \cite{Erdos1959Gallai}.

\begin{theorem}\textup{(Erd\H{o}s and Gallai \cite{Erdos1959Gallai})}\label{ErdosThm}
Let $H$ be a graph of order $n\ge 3$, and $u,v$ are any pair distinct and nonadjacent vertices. If $d_H(u)+d_H(v)\ge n+1$, then $H$ is Hamilton-connected. 
\end{theorem}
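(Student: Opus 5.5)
The plan is to prove the statement by an edge-maximality argument in the spirit of the Bondy--Chv\'atal closure, reducing everything to a single "insertion" lemma. First observe that the hypothesis, namely $d_H(a)+d_H(b)\ge n+1$ for every nonadjacent pair $a,b$, is preserved when edges are added. Suppose the statement fails. Fix a graph $H$ of order $n$ satisfying the hypothesis and two distinct vertices $x,y$ with no Hamilton $x$--$y$ path. Among all such graphs on the same vertex set, containing $H$ and still having no Hamilton $x$--$y$ path, choose one that is edge-maximal, and call it $H$ again. Since $K_n$ with $n\ge 3$ clearly has a Hamilton path between any two prescribed vertices, $H\ne K_n$. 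So there is a nonadjacent pair $a,b$ with $d_H(a)+d_H(b)\ge n+1$, and by maximality $H+ab$ has a Hamilton $x$--$y$ path $P=p_1p_2\cdots p_n$ with $p_1=x$, $p_n=y$. This path must use the edge $ab$, say $a=p_i$ and $b=p_{i+1}$ (after possibly swapping the names $a,b$).

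The key step is to reroute $P$ around the missing edge $ab$ using one "crossing" pair of edges of $H$. There are two kinds of switch.

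\emph{Right switch.} If for some $j$ with $i+2\le j\le n$ we have $ap_{j-1}\in E(H)$ and $bp_j\in E(H)$ (with $j-1\ge i+2$), then
\[
x\,p_2\cdots p_i\;p_{j-1}p_{j-2}\cdots p_{i+1}\;p_j\cdots p_n
\]
is a Hamilton $x$--$y$ path in $H$.

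\emph{Left switch.} If for some $j$ with $1\le j\le i-1$ we have $ap_j\in E(H)$ and $bp_{j+1}\in E(H)$, then
\[
p_1\cdots p_j\;p_i p_{i-1}\cdots p_{j+1}\;p_{i+1}\cdots p_n
\]
is a Hamilton $x$--$y$ path in $H$.

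Either switch contradicts the choice of $H$. It therefore suffices to show that a right or left switch always exists.

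The counting is the heart of the proof, and I expect it to be the main obstacle, because the boundary indices must be handled exactly: the hypothesis gives $n+1$ rather than $n$ precisely to pay for them.

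Define two index sets.
\begin{itemize}
\item $A$ records each neighbour $p_t$ of $a$ (where $t\notin\{i,i+1\}$) by a "partner index": $t+1$ if $t\ge i+2$, and $t+1$ also if $t\le i-1$. In both cases the partner index is the index of the vertex that would need to be adjacent to $b$.
\item $B$ is the set of indices $s$ of neighbours $p_s$ of $b$, where $s\notin\{i,i+1\}$.
\end{itemize}
We have $|A|=d_H(a)$ and $|B|=d_H(b)$, since $a$ and $b$ are nonadjacent and neither is adjacent to itself. The partner map is injective. Its values lie in $\{2,\dots,i\}\cup\{i+3,\dots,n+1\}$, and the value $i$ (coming from $t=i-1$) must be discarded, since $p_i=a$ is not a neighbour of $b$. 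The set $B$ lies in $\{1,\dots,i-1\}\cup\{i+2,\dots,n\}$.

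Both sets therefore live in a ground set of roughly $n$ indices, minus the forbidden positions. A careful count should show that $|A\cap B|\ge d_H(a)+d_H(b)-n\ge 1$, after checking the extremal indices. The relevant extremal cases are $t=n$, whose partner $n+1$ is not a vertex; $t=i-1$; and the cases $a$ or $b\in\{x,y\}$, that is, $i=1$ or $i+1=n$. Any common index gives a right or left switch, which yields the contradiction and proves that $H$ is Hamilton-connected.

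If the bookkeeping in the single-index count turns out to leave a gap of one, I would instead count separately on the two sides. Apply the pigeonhole principle to the segment $p_{i+2},\dots,p_n$ for right switches and to $p_1,\dots,p_{i-1}$ for left switches, and add the two inequalities. This is the standard way the $+1$ in the hypothesis gets absorbed.
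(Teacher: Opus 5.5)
\textbf{Verdict.} Your proof is correct. The paper gives no proof of this statement: it quotes the result from the literature and uses it as a black box. Your edge-maximality argument (the Bondy--Chv\'atal closure idea) is therefore a self-contained replacement for a citation, not a variant of anything in the paper.

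\textbf{The count you left as ``should show''.} It closes directly, so the two-sided fallback is unnecessary (though it also works). With your definitions,
$A\subseteq\{2,\dots,i\}\cup\{i+3,\dots,n+1\}$ and $B\subseteq\{1,\dots,i-1\}\cup\{i+2,\dots,n\}$.
Hence $A\cup B\subseteq\{1,\dots,n+1\}\setminus\{i+1\}$, a set of exactly $n$ indices. Therefore
$|A\cap B|\ge |A|+|B|-n=d_H(a)+d_H(b)-n\ge 1$, and this is precisely where the $+1$ in the hypothesis is spent.

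\textbf{Turning a common index into a switch.} A common index $s$ cannot be $1$ or $i+2$, since these are not in $A$. It cannot be $i$ or $n+1$, since these are not in $B$. So there are two possibilities.
\begin{itemize}
\item If $2\le s\le i-1$, you have $ap_{s-1},bp_s\in E(H)$. This is a left switch with $j=s-1\le i-2$.
\item If $i+3\le s\le n$, you again have $ap_{s-1},bp_s\in E(H)$. This is a right switch with $j=s$ and $j-1\ge i+2$.
\end{itemize}
The boundary cases $i=1$ and $i+1=n$ need no separate treatment, because one of the two ranges is then simply empty.

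\textbf{On discarding the value $i$ from $A$.} This is harmless but unnecessary. Keep $|A|=d_H(a)$ and just observe that $i\notin B$. If you do delete it, the ground set shrinks by one as well, so the inequality survives either way.

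\textbf{What each approach buys.} Your route gives self-containment and a slightly stronger, reusable statement: if $d_H(a)+d_H(b)\ge n+1$ and $H+ab$ is Hamilton-connected, then so is $H$. The paper's citation buys brevity, which is reasonable for a classical result used only as a tool.
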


Now, we give the definiton of the triangular terminal graph. 

\begin{definition}
\emph{
Let $U=\{u_1,u_2,u_3\}$ and $U'=\{u_1',u_2',u_3'\}$ be two disjoint vertex sets, each inducing a copy of $K_3$. Let $Q_1,Q_2,Q_3$ be pairwise vertex-disjoint internal graphs, all disjoint from $U\cup U'$. For each $i\in\{1,2,3\}$, place the internal graph $Q_i$ between $u_i$ and $u_i'$ (see Figure \ref{fig:terminal-frame}). Choose two nonempty subsets $X_i,Y_i\subseteq V(Q_i)$, called the }outer sets \emph{of $Q_i$, and define
\[
O_i=\{xu_i:x\in X_i\}
\quad\text{and}\quad
O_i'=\{yu_i':y\in Y_i\}.
\]
The only edges joining $Q_i$ to vertices outside $Q_i$ are those in $O_i\cup O_i'$. The resulting graph is called a }triangular terminal graph.
\end{definition}

\begin{figure}[ht]
\centering
\begin{tikzpicture}[scale=1.05]
  \node[terminal,label=left:$u_1$] (u1) at (0,1.8) {};
  \node[terminal,label=left:$u_2$] (u2) at (-0.75,0) {};
  \node[terminal,label=left:$u_3$] (u3) at (0,-1.8) {};
  \node[terminal,label=right:$u'_1$] (v1) at (5,1.8) {};
  \node[terminal,label=right:$u'_2$] (v2) at (5.75,0) {};
  \node[terminal,label=right:$u'_3$] (v3) at (5,-1.8) {};
  \node[module] (q1) at (2.5,1.8) {$Q_1$};
  \node[module] (q2) at (2.5,0) {$Q_2$};
  \node[module] (q3) at (2.5,-1.8) {$Q_3$};
 
\coordinate (t1) at (1.9,2) ;
\coordinate (t2) at (1.9,0.2) ;
\coordinate (t3) at (1.9,-1.6) ;
\coordinate (x1) at (1.9,1.6) ;
\coordinate (x2) at (1.9,-0.2) ;
\coordinate (x3) at (1.9,-2) ;

\coordinate (y1) at (3.1,2) ;
\coordinate (y2) at (3.1,0.2) ;
\coordinate (y3) at (3.1,-1.6) ;
\coordinate (z1) at (3.1,1.6) ;
\coordinate (z2) at (3.1,-0.2) ;
\coordinate (z3) at (3.1,-2) ;
  \draw (u1)--(u2)--(u3)--(u1);
  \draw (v1)--(v2)--(v3)--(v1);
  \foreach \i in {1,2,3}{
    \draw (u\i)--(q\i);
\draw (u\i)--(t\i);
\draw (u\i)--(x\i);
    \draw (q\i)--(v\i);
\draw (y\i)--(v\i);
\draw (z\i)--(v\i);
  }
\end{tikzpicture}
\caption{The triangular terminal graph.}
\label{fig:terminal-frame}
\end{figure}

For an internal graph $Q$ placed between two vertices $u$ and $u'$ in a triangular terminal graph, and for distinct vertices
$x,y\in V(Q)\cup\{u,u'\}$, a \emph{spanning $(x,y)$-path through $Q$} is an $(x,y)$-path whose internal vertices lie in $V(Q)$ and that contains every vertex of $Q$. We say that $Q$ has the \emph{SLC property} if it satisfies the following three conditions.

\noindent \textbf{(A1)} There exist two spanning $(u,u')$-paths through $Q$ with distinct initial edges.

\noindent \textbf{(A2)} There exist two spanning $(u',u)$-paths through $Q$ with distinct initial edges.

\noindent \textbf{(A3)} For every $v\in V(Q)$, there exist two spanning $(v,u)$-paths through $Q$ with distinct initial edges.

Let $V_1=\{x_0,\ldots,x_{k-3}\}$ and $V_2=\{y_0,\ldots,y_{k-3}\}$ be vertex sets. And let $E_1=\{x_ix_j~|~i,j=0,1,\ldots,k-3,i\ne j\}$, $E_2=\{y_iy_j~|~i,j=0,1,\ldots,k-3,i\ne j\}$, $E_3=\{x_iy_i~|~i=0,1,\ldots,k-3\}$ and $E_4=\{x_iy_{i+1}~|~i=0,1,\ldots,k-3\}$ (where $i+1$ is taken  modulo $k-2$) be edge sets. For every integer $k\ge 4$, the ordinary internal graph $W_k$ introduced by
Liu and Qiao \cite{Liu2026Qiao} is the graph with vertex set $V(W_k)=V_1\cup V_2$ and edge set $E(W_k)=\cup_{i=1}^4 E_i$.
The base graph $G_k$ of Liu and Qiao \cite{Liu2026Qiao} is the triangular
terminal graph in which each internal graph is a copy of $W_k$, with outer
sets $V_1$ and $V_2$.

For every even integer $k\ge 6$, we introduce the following reconstruction of the graph $W_k$ introduced by Liu and Qiao \cite{Liu2026Qiao}. Starting from $W_k$, add a new vertex $z$, delete all edges in $E_4$, and add
the edges in $\bigcup_{j=1}^4 T_j$, where $T_1=\{x_iz~|~i=0,1,\ldots,\frac{k}{2}-1\}$, $T_2=\{y_iz~|~i=0,1,\ldots,\frac{k}{2}-1\}$, $T_3=\{x_iy_{i+1}~|~i=\frac{k}{2},\frac{k}{2}+1,\ldots,k-4\}$ and $T_4=\{x_{k-3}y_{\frac{k}{2}}\}$. The resulting graph is denoted by $W_k'$ and is called the reconstruction of $W_k$; its outer sets are $V_1$ and $V_2$. By construction,
$V(W_k')=V_1\cup V_2\cup\{z\}$ and $E(W_k')=(\cup_{i=1}^3 E_i)\cup (\cup_{j=1}^4 T_j)$.

Let $X$ and $Y$ be sets of vertices of a graph $G$. Denote by $E_G(X,Y)$ the set of edges of $G$ with one end in $X$ and the other end in $Y$. The set $E_G(X,V(G)\setminus X)$ is called the \emph{edge cut} of $G$ associated with $X$, and is denoted by $\partial (X)$. We will need the following two lemmas about the triangular terminal graph.

\begin{lemma}\label{DHTLem}
\textup{(Liu and Qiao \cite{Liu2026Qiao})}. If every internal graph  $Q_i$ in the triangular terminal graph $H$ is $SLC$, then $H$ is  doubly homogeneously traceable.
\end{lemma}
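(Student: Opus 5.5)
The plan is to verify the definition of doubly homogeneously traceable directly. For each vertex $v$ of $H$, I will build a Hamilton $v$-path by concatenating spanning paths through the internal graphs $Q_1,Q_2,Q_3$ (supplied by the SLC property) with edges of the two triangles $U$ and $U'$. The two required paths will share everything except the first segment. That segment will be one of the two spanning paths through some $Q_i$ with distinct initial edges given by (A1), (A2) or (A3). Since the $Q_i$ are pairwise disjoint and disjoint from $U\cup U'$, and each spanning path through $Q_i$ has its internal vertices in $V(Q_i)$, any such concatenation meets itself only at the prescribed junction vertices. Hence it is a path, and it is Hamiltonian as soon as every $Q_i$ and every vertex of $U\cup U'$ is used once. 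The indices $1,2,3$ play symmetric roles, so I will treat $i=1$ and permute labels for the other cases.

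\textbf{Case $v=u_1$.} Take a spanning $(u_1,u_1')$-path $P_1$ through $Q_1$. By (A1) there are two choices of $P_1$ with distinct initial edges. Continue with the edge $u_1'u_2'$, then a spanning $(u_2',u_2)$-path through $Q_2$ from (A2), then the edge $u_2u_3$, and finally a spanning $(u_3,u_3')$-path through $Q_3$ from (A1). The resulting path is
\[
u_1P_1u_1'\,u_2'P_2u_2\,u_3P_3u_3',
\]
and it covers $V(H)$. The two choices of $P_1$ give two Hamilton $u_1$-paths with distinct initial edges at $u_1$.

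\textbf{Case $v=u_1'$.} This is the mirror image. Start with a spanning $(u_1',u_1)$-path through $Q_1$ (two choices by (A2)). Then use $u_1u_2$, a spanning $(u_2,u_2')$-path through $Q_2$ by (A1), the edge $u_2'u_3'$, and a spanning $(u_3',u_3)$-path through $Q_3$ by (A2).

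\textbf{Case $v\in V(Q_1)$.} Here (A3) is used. Take a spanning $(v,u_1)$-path $R$ through $Q_1$; there are two choices with distinct initial edges at $v$. Then continue along $u_1u_2$, a spanning $(u_2,u_2')$-path through $Q_2$ (A1), the edges $u_2'u_1'$ and $u_1'u_3'$, and a spanning $(u_3',u_3)$-path through $Q_3$ (A2). This gives
\[
vRu_1\,u_2P_2u_2'\,u_1'u_3'\,P_3u_3.
\]
The path visits all of $Q_1$, then $u_1,u_2$, all of $Q_2$, then $u_2',u_1',u_3'$, all of $Q_3$, and finally $u_3$. So it is a Hamilton $v$-path, and the two choices of $R$ give the two required paths.

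The only delicate point is the case $v\in V(Q_i)$. Condition (A3) only provides paths from $v$ to $u_i$, never to $u_i'$. So the vertex $u_i'$ must be absorbed inside the triangle $U'$, which is why the route goes $u_2'\to u_1'\to u_3'$. Since $U'$ induces $K_3$, this detour is always available, so I expect no real obstacle. Combining the three cases over all choices of index covers every vertex of $H$, and hence $H$ is doubly homogeneously traceable.
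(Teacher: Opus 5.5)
Your proof is correct and complete. The three concatenations are Hamilton paths with $v$ as an endpoint. The two choices supplied by (A1), (A2) or (A3) give distinct edges at $v$. Routing $u_2'\to u_1'\to u_3'$ correctly absorbs $u_i'$, which (A3) cannot reach.

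The paper does not prove this lemma itself; it imports it from Liu and Qiao. Your direct case-by-case verification is the natural self-contained argument for it.
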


\begin{lemma}\label{NHLem}
If every internal graph $Q_i$ in the triangular terminal graph $H$ is nonempty, then $H$ is  nonhamiltonian.
\end{lemma}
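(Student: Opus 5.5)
We prove Lemma \ref{NHLem} by a counting argument on how a Hamilton cycle can cross the edge cuts around the internal graphs. Suppose, for contradiction, that $C$ is a Hamilton cycle of $H$. For each $i\in\{1,2,3\}$ let $S_i=V(Q_i)$. By the definition of a triangular terminal graph, every edge of $\partial(S_i)$ lies in $O_i\cup O_i'$. So each edge leaving $S_i$ goes either to $u_i$ or to $u_i'$. Since $S_i$ is nonempty and $C$ is a Hamilton cycle, $C$ must enter and leave $S_i$. Hence $C$ uses an even, positive number of edges of $\partial(S_i)$.

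We next bound this number. Every such edge is incident with $u_i$ or $u_i'$, and each of these vertices has degree $2$ on $C$. So $|E(C)\cap\partial(S_i)|\le 4$, and therefore it equals $2$ or $4$.

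\emph{Case of four crossing edges.} Suppose $|E(C)\cap\partial(S_i)|=4$ for some $i$. Then both $C$-edges at $u_i$ go into $S_i$, so $C$ uses neither $u_iu_j$ nor $u_iu_l$, where $\{j,l\}=\{1,2,3\}\setminus\{i\}$. Likewise both $C$-edges at $u_i'$ go into $S_i$. Consequently $C$ has no edge from $S_i\cup\{u_i,u_i'\}$ to the rest of the graph. This rest contains $u_j$, so it is nonempty, and $C$ would be disconnected, which is impossible.

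\emph{Case of two crossing edges for every $i$.} Now assume $|E(C)\cap\partial(S_i)|=2$ for each $i$. Consider the set $A=U\cup\bigcup_i S_i$, whose complement is $U'$. The edges from $A$ to $U'$ are exactly the edges in $O_1'\cup O_2'\cup O_3'$. Since $C$ is a cycle, $|E(C)\cap\partial(A)|$ is even.

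For each $i$, the two $C$-edges in $\partial(S_i)$ form one of three patterns:
\begin{itemize}
\item both go to $u_i$, contributing $0$ edges to $\partial(A)$;
\item both go to $u_i'$, contributing $2$;
\item one goes to $u_i$ and one to $u_i'$, contributing $1$.
\end{itemize}

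\emph{Ruling out the first two patterns.} If both edges go to $u_i$, the segment of $C$ through $S_i$ closes up as $u_i\,(\text{path in }S_i)\,u_i$. That makes $C$ itself this short cycle, omitting $u_i'$, which is impossible. The pattern where both edges go to $u_i'$ fails symmetrically. So every $S_i$ is traversed from $u_i$ to $u_i'$, contributing exactly $1$ to $\partial(A)$. Then $|E(C)\cap\partial(A)|=3$, which is odd, a contradiction.

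The argument is elementary; the step needing the most care is excluding the degenerate patterns above, which relies on $H$ having vertices outside $S_i\cup\{u_i,u_i'\}$.
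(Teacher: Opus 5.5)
Your proof is correct and follows the paper's argument. Both use the same parity analysis of $E(C)\cap\partial(V(Q_i))$ to rule out four crossing edges and the two degenerate patterns, which forces each $Q_i$ to be traversed from $u_i$ to $u_i'$. The only difference is the last step: the paper contracts these paths to the rungs of the triangular prism and asserts that the resulting Hamilton cycle is impossible, while your odd count $|E(C)\cap\partial(U')|=3$ supplies exactly the reason that assertion holds.
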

\begin{proof}
Suppose, to the contrary, that $H$ contains a Hamilton cycle $F$, and assume that
$V(Q_j)\ne\emptyset$ for each $j\in\{1,2,3\}$. Fix
$i\in\{1,2,3\}$ and set $T_i=V(Q_i)$. We claim that $|E(F)\cap \partial_H(T_i)|=2.$ Indeed, since $F$ is a cycle and $T_i$ is a proper nonempty subset of $V(H)$, the number $|E(F)\cap \partial_H(T_i)|$ is positive and even. If $|E(F)\cap \partial_H(T_i)|\ne 2$, then $|E(F)\cap \partial_H(T_i)|\ge 4$. Since the only edges from
$Q_i$ to the rest of $H$ are incident with $u_i$ or $u_i'$, and since $d_F(u_i)=d_F(u_i')=2$, this would force both $u_i$ and
$u_i'$ to have their two incident edges in $F$ joining them to vertices of
$T_i$. Hence $F[T_i\cup\{u_i,u_i'\}]$ would contain a cycle component separated
from the rest of $F$, a contradiction. Therefore, $|E(F)\cap \partial_H(T_i)|=2.$
Moreover, suppose that\[
|E(F)\cap \partial_H(T_i)\cap E_H(T_i,\{u_i\})|=2
\quad\text{or}\quad
|E(F)\cap \partial_H(T_i)\cap E_H(T_i,\{u_i'\})|=2.\] Then, either $F[T_i\cup\{u_i\}]$ or
$F[T_i\cup\{u_i'\}]$ would contain a cycle component separated from the rest of
$F$, again a contradiction. It follows that
\[
|E(F)\cap \partial_H(T_i)\cap E_H(T_i,\{u_i\})|=1
\quad\text{and}\quad
|E(F)\cap \partial_H(T_i)\cap E_H(T_i,\{u_i'\})|=1.
\]
Thus, the subgraph of $F$ induced by $T_i\cup\{u_i,u_i'\}$ is a spanning
$(u_i,u_i')$-path through $Q_i$. Denote this path by $P_i$. Since $i$ was
arbitrary, such a path $P_i$ exists for every $i\in\{1,2,3\}$. 

Contracting each path $P_i$ to the edge $u_iu_i'$, we obtain a Hamilton cycle $F'$ in the triangular prism on $U\cup U'$ (see Figure \ref{fig:prism-obstruction}). Moreover, $u_iu_i'\in E(F')$ for each $i\in\{1,2,3\}$. It is easy to check that it is impossible. So $H$ is nonhamiltonian. 
\end{proof}

\begin{figure}[ht]
\centering
\begin{tikzpicture}[scale=1.05]
  \node[terminal,label=left:$u_1$] (u1) at (0,1.8) {};
  \node[terminal,label=left:$u_2$] (u2) at (-0.75,0) {};
  \node[terminal,label=left:$u_3$] (u3) at (0,-1.8) {};
  \node[terminal,label=right:$u'_1$] (v1) at (4,1.8) {};
  \node[terminal,label=right:$u'_2$] (v2) at (4.75,0) {};
  \node[terminal,label=right:$u'_3$] (v3) at (4,-1.8) {};
  \draw (u1)--(u2)--(u3)--(u1);
  \draw (v1)--(v2)--(v3)--(v1);
\foreach \i in {1,2,3}{\draw(u\i)--(v\i);}

\end{tikzpicture}
\caption{The triangular prism on $U\cup U'$.}
\label{fig:prism-obstruction}
\end{figure}

\section{The odd-degree case of Theorem \ref{MainResult}}

In Subsection \ref{Odd11}, we settle all residue classes not covered by Theorem \ref{LiuThm2026} for every odd degree $k$. In Subsection \ref{Odd7}, we reduce the order threshold in Corollary \ref{LiuCorol2026} to $6(k-2)$ for every odd degree $k$.

\subsection{Odd degrees $k\ge 11$: all even orders $n\ge 6(k-1)$}\label{Odd11}
In this subsection, we first introduce an internal graph $D_{k,r}$ to settle the remaining residue classes for every odd integer $k\ge 11$. Let $t=\lfloor\frac{k}{2}\rfloor$ and choose an integer $4\le r\le t-1$. Let $L=k-2+r=2t-1+r$. Let $(V_1,V_2)$  be a vertex partition of $V(D_{k,r})$, where $V_1=\{x_0,\ldots,x_{L-1}\}$ and $V_2=\{y_0,\ldots,y_{L-1}\}$. Let $E(D_{k,r})=\cup_{j=1}^8 M_j$, where

$M_1=\{x_ix_{i+ j}~|~i=0,1,\ldots,L-1;j=1,2,\ldots,t-1\}$, 

$M_2=\{x_ix_{i- j}~|~i=0,1,\ldots,L-1;j=1,2,\ldots,t-1\}$, 

$M_3=\{y_iy_{i+j}~|~i=0,1,\ldots,L-1;j=1,2,\ldots,t-1\}$,  

$M_4=\{y_iy_{i-j}~|~i=0,1,\ldots,L-1;j=1,2,\ldots,t-1\}$, 

$M_5=\{x_iy_i~|~i=0,1,\ldots,L-1\}$, 

$M_6=\{x_iy_{i+1}~|~i=0,1,\ldots,L-1\}$,
 
$M_7=\{x_iy_{i+2}~|~i=k-2,k-1,\ldots,L-3\}$ and 

$M_8=\{x_{L-2}y_{k-2},x_{L-1}y_{k-1}\}$,\\
where the subscripts are taken modulo $L$.
 
Let $A=\{x_0,\ldots,x_{k-3}\}$ and $B=\{y_0,\ldots,y_{k-3}\}$ be two outer sets of $D_{k,r}$. 

\begin{lemma}\label{SLCLem}
If $D_{k,r}$ is placed between the vertices $u$ and $u'$ in a triangular terminal graph, then $D_{k,r}$ has the SLC property.
\end{lemma}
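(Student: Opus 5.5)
Throughout, $u$ is joined exactly to $A=\{x_0,\ldots,x_{k-3}\}$ and $u'$ exactly to $B=\{y_0,\ldots,y_{k-3}\}$. The plan is to build every required spanning path from a small set of concrete Hamilton paths of $D_{k,r}$, and then attach $u$ and $u'$ at their ends.

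The structure we exploit is the following. The graphs $D[V_1]$ and $D[V_2]$ are circulants $C_L^{(t-1)}$, each vertex being joined to the $t-1$ cyclic neighbours on either side. The matching $M_5$ gives the rungs $x_iy_i$, and $M_6\cup M_7\cup M_8$ supply the extra cross edges. Since $L=2t-1+r$ and $t-1\ge 4$, each circulant contains the Hamilton cycle $x_0x_1\cdots x_{L-1}x_0$. It is also highly Hamilton-connected: within $D[V_1]$ (respectively $D[V_2]$), for essentially any two distinct vertices there is a spanning path between them, obtained by a zigzag route.

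As a fallback for Hamilton-connectivity we use Theorem \ref{ErdosThm}. Each circulant has degree $2t-2=k-3$ on $L=k-2+r\le k-2+t-1$ vertices, so degree sums are about $2k-6$ against $L+1\le 3k/2-2$. This gives $2k-6\ge L+1$ whenever $r\le k-5$, which holds because $r\le t-1$ and $k\ge 11$. Hence $D[V_1]$ and $D[V_2]$ are Hamilton-connected.

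With this in hand, the three conditions reduce to choosing endpoints.

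\textbf{(A1).} Choose $a\in A$ and a cross edge $x_jy_j$, and take the path $u\,a\,(\text{Hamilton path of }D[V_1]\text{ from }a\text{ to }x_j)\,y_j\,(\text{Hamilton path of }D[V_2]\text{ from }y_j\text{ to }b)\,u'$, where $b\in B\setminus\{y_j\}$. Letting $a$ take two different values gives two distinct initial edges $ua$. Since $|A|=k-2\ge 2$, both the choice of $a$ and the choices $j\ne$ (index of $a$) and $b\ne y_j$ are available, and Hamilton-connectivity of $D[V_1]$ and $D[V_2]$ provides the internal paths.

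\textbf{(A2).} This is the same as (A1) read from $u'$: vary $b\in B$ and take the paths in reverse.

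\textbf{(A3).} Here $v\in V(D)$ must be an endpoint and the path must end at $u$. Suppose $v=y_i$. Take a Hamilton path of $D[V_2]$ from $y_i$ to some $y_j$ with $j\neq i$, cross to $x_j$ or $x_{j-1}$ by an $M_5$ or $M_6$ edge, traverse a Hamilton path of $D[V_1]$ to some $a\in A$, and finish at $u$. To get two different initial edges at $y_i$, we use Hamilton-connectivity of $D[V_2]$ and the fact that the first edge out of $y_i$ can be prescribed. We take Hamilton paths of $D[V_2]-y_i$ starting at two different neighbours $y_{i\pm1}$ of $y_i$; the graph $D[V_2]-y_i$ still satisfies the degree condition of Theorem \ref{ErdosThm}, since the loss of one vertex is small compared with the slack.

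Suppose instead $v=x_i$. We must route through $V_2$, so $u'$ is never used: the path starts at $x_i$, covers $V_1$ only partially, and returns. The simplest form is $x_i\to$ all of $V_2$ $\to$ the rest of $V_1\to a\to u$. Concretely, start with the edge $x_iy_i$ or $x_iy_{i+1}$ (the two distinct initial edges), traverse a Hamilton path of $D[V_2]$ ending at some $y_j$, cross to $x_j$ or $x_{j-1}$, and traverse a Hamilton path of $D[V_1]-x_i$ ending at some $a\in A$. Since $|A|\ge 7$, we can pick $a\ne x_i$ and a crossing edge whose $V_1$-end differs from both $x_i$ and $a$.

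\textbf{Main obstacle.} The hardest step is checking rigorously that $D[V_1]-x_i$ remains Hamilton-connected for all parameter values $4\le r\le t-1$. After deleting $x_i$, degrees drop to $2t-3$ on $L-1$ vertices, so Theorem \ref{ErdosThm} requires $4t-6\ge L=2t-1+r$, that is, $r\le 2t-5$. This holds because $r\le t-1\le 2t-5$ when $t\ge4$, so the Erd\H{o}s--Gallai check should go through. If some boundary case fails, the first thing to try is explicit zigzag paths in the circulant.

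The remaining care is to ensure that the cross edges used exist for every index. This is guaranteed by $M_5\cup M_6$, which cover all indices cyclically; $M_7$ and $M_8$ are not needed for this lemma (they serve regularity).
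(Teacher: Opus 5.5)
Your proposal is correct and follows essentially the same route as the paper. Erd\H{o}s--Gallai gives Hamilton-connectivity of $D_{k,r}[V_i]$ and of $D_{k,r}[V_i]-v$, and your checks $2k-6\ge L+1$ and $4t-6\ge L$ are right. Each of (A1)--(A3) is then built as a Hamilton path on one side, a cross edge from $M_5\cup M_6$, and a Hamilton path on the other side, with the first edge varied ($ua$, $u'b$, $y_iy_{i\pm1}$, or $x_iy_i$ versus $x_iy_{i+1}$) to obtain distinct initial edges; the only implicit step, choosing the junction vertex away from both starting vertices, is routine since $L$ is large.
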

\begin{proof}
Observe that $D_{k,r}[V_1]\cong D_{k,r}[V_2]$ and that
$t=\lfloor k/2\rfloor\ge 5$. For each $i\in\{1,2\}$, let
$F_i=D_{k,r}[V_i]$. Then $|V(F_i)|=L$ and $F_i$ is $(2t-2)$-regular. Since
$L=2t-1+r$ and $r\le t-1$, we have
\[
2(2t-2)=4t-4=L+2t-r-3\ge L+t-2\ge L+1.
\]
Thus, by Theorem \ref{ErdosThm}, each $F_i$ is Hamilton-connected.  Applying the same argument, we conclude that $F_i-v$ is Hamilton-connected
for every $v\in V(F_i)$.

Since $F_1$ is Hamilton-connected, for each $j\in\{1,2\}$ there exists a
Hamilton path $P_j$ in $F_1$ from $x_j$ to $x_3$. Since $F_2$ is
Hamilton-connected, there exists a Hamilton path $P_3$ in $F_2$ from $y_3$ to
$y_0$. Hence
\[
ux_1P_1x_3y_3P_3y_0u'
\quad\text{and}\quad
ux_2P_2x_3y_3P_3y_0u'
\]
are two spanning $(u,u')$-paths through $D_{k,r}$ with distinct initial edges.
Therefore, $D_{k,r}$ satisfies \textbf{(A1)}. By symmetry, $D_{k,r}$ also
satisfies \textbf{(A2)}.

It remains to verify \textbf{(A3)}. First suppose that $v\in V_1$. By the
definition of $D_{k,r}$, there exist two distinct vertices
$z_1,z_2\in V_2$ such that $vz_1,\,vz_2\in E_{D_{k,r}}(V_1,V_2)$.
Choose an edge $v_1v_2\in E_{D_{k,r}}(V_1,V_2)$ with
$v_1\in V_1\setminus\{v\}$ and
$v_2\in V_2\setminus\{z_1,z_2\}$. Since $F_2$ is Hamilton-connected, for each
$j\in\{1,2\}$ there exists a Hamilton path $P_j'$ in $F_2$ from $z_j$ to
$v_2$. Choose a vertex $v_3\in A\setminus\{v,v_1\}$. Since $F_1-v$ is
Hamilton-connected, there exists a Hamilton path $Q'$ in $F_1-v$ from $v_1$
to $v_3$. Consequently,
\[
vz_1P_1'v_2v_1Q'v_3u
\quad\text{and}\quad
vz_2P_2'v_2v_1Q'v_3u
\]
are two spanning $(v,u)$-paths through $D_{k,r}$ with distinct initial edges.

Now suppose that $v\in V_2$. Choose two distinct neighbors $w_1$ and $w_2$ of
$v$ in $D_{k,r}[V_2]$. Choose an edge
$s_1s_2\in E_{D_{k,r}}(V_1,V_2)$ such that
$s_1\in V_1$ and $s_2\in V_2\setminus\{v,w_1,w_2\}$. Since $F_2-v$ is
Hamilton-connected, for each $j\in\{1,2\}$ there exists a Hamilton path
$P_j''$ in $F_2-v$ from $w_j$ to $s_2$. Choose a vertex
$s_3\in A\setminus\{s_1\}$. Since $F_1$ is Hamilton-connected, there exists a
Hamilton path $Q''$ in $F_1$ from $s_1$ to $s_3$. Hence
\[
vw_1P_1''s_2s_1Q''s_3u
\quad\text{and}\quad
vw_2P_2''s_2s_1Q''s_3u
\]
are two spanning $(v,u)$-paths through $D_{k,r}$ with distinct initial edges.
Therefore, $D_{k,r}$ satisfies \textbf{(A3)}. Hence, $D_{k,r}$ has the SLC property.
\end{proof}

\begin{theorem}\label{OddThm}
For every odd integer $k\ge 11$, integer $p\ge 6$ and even residue $q\in\{8,10,\ldots,k-3\}$, there exists a $k$-regular doubly homogeneously traceable nonhamiltonian graph of order $p(k-1)+q$. 
\end{theorem}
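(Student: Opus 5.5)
The plan is to build a base graph of order exactly $6(k-1)+q$ as a triangular terminal graph that mixes one copy of $D_{k,r}$ with two copies of $W_k$. I then enlarge it by repeated blow-ups to reach order $p(k-1)+q$. Write $k=2t+1$ and $q=2r$. The range $q\in\{8,10,\ldots,k-3\}$ corresponds exactly to $4\le r\le t-1$, which is the admissible range for $D_{k,r}$.

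\textbf{Base graph.} Let $H_0$ be the triangular terminal graph with $Q_1=D_{k,r}$ (outer sets $A,B$) and $Q_2=Q_3=W_k$ (outer sets $V_1,V_2$).

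\textbf{Regularity of $H_0$.} I would check this by degree counting.
\begin{itemize}
\item In $D_{k,r}$, each $F_i$ is $(2t-2)=(k-3)$-regular. The sets $M_5,M_6$ give every vertex $2$ cross edges.
\item $M_7\cup M_8$ gives one further cross edge to each of $x_{k-2},\ldots,x_{L-1}$ and $y_{k-2},\ldots,y_{L-1}$. This uses that $M_7$ covers $x_{k-2},\ldots,x_{L-3}$ and $y_k,\ldots,y_{L-1}$, and $M_8$ covers the remaining four vertices.
\item So non-outer vertices of $D_{k,r}$ have degree $k$. Outer vertices in $A\cup B$ have degree $k-1$, and the edge to $u$ or $u'$ raises them to $k$.
\item In $W_k$, every vertex has degree $(k-3)+2=k-1$, plus one edge to a terminal.
\item Each terminal $u_i$ is adjacent to $k-2$ outer vertices and to $2$ triangle vertices.
\end{itemize}
Hence $H_0$ is $k$-regular. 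Its order is
\[
6+2(k-2+r)+4(k-2)=6(k-1)+2r=6(k-1)+q.
\]

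\textbf{Properties of $H_0$.} Lemma \ref{SLCLem} gives that $D_{k,r}$ has the SLC property. For $W_k$ I would invoke the corresponding fact from Liu and Qiao \cite{Liu2026Qiao}, which underlies their base graph $G_k$. If a self-contained argument is wanted, the proof of Lemma \ref{SLCLem} can be repeated: the cliques $W_k[V_1]$ and $W_k[V_2]$ are Hamilton-connected, as are their vertex-deleted subgraphs, and there are at least two cross edges at each vertex. Lemma \ref{DHTLem} then makes $H_0$ doubly homogeneously traceable. Lemma \ref{NHLem} makes $H_0$ nonhamiltonian.

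\textbf{Blow-ups.} For $p\ge 6$, blow up a vertex of degree $k$ lying inside $Q_1$ into $K_k$, and repeat this $p-6$ times, always choosing a vertex inside the current $Q_1$.
\begin{itemize}
\item Each blow-up adds $k-1$ vertices. Each new vertex has degree $(k-1)+1=k$, and the degrees of the old neighbours are unchanged, so the graph stays $k$-regular.
\item The final order is $p(k-1)+q$.
\item By Lemma \ref{BlowLem}, double homogeneous traceability is preserved at each step.
\item Since the blown-up vertex lies in $Q_1$, the result is still a triangular terminal graph. Its new internal graph replaces $Q_1$, its outer sets are unchanged, and all internal graphs remain nonempty. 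So Lemma \ref{NHLem} again gives nonhamiltonicity.
\end{itemize}

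\textbf{Main obstacle.} The delicate point is bookkeeping in the degree count of $D_{k,r}$. One must make sure that $M_7$ and $M_8$ hit each non-outer vertex exactly once. One must also make sure they create no duplicate edges with $M_5\cup M_6$; this uses $r\ge4$, so that the index ranges do not wrap around. The second point to confirm is that $W_k$ indeed satisfies SLC, and I would cite \cite{Liu2026Qiao} for it.
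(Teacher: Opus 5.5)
Your proposal is correct and is essentially the paper's proof. You replace one copy of $W_k$ in $G_k$ by $D_{k,r}$ with $q=2r$, and apply Lemmas \ref{DHTLem}, \ref{NHLem} and \ref{SLCLem}, using the SLC property of $W_k$ from Liu and Qiao. You then perform $p-6$ blow-ups at vertices inside internal graphs. Your explicit degree count for $M_5,\ldots,M_8$ just fills in what the paper calls ``by construction.''

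One small nit. If you blow up an outer vertex, the outer set does change: the vertex is replaced by its copy matched to the terminal. The result is still a triangular terminal graph with nonempty internal graphs, so Lemma \ref{NHLem} applies as you say.
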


\begin{proof}
Let $q=2r$. Then $4\le r\le \frac{k-3}{2}=t-1$. Starting from the base graph $G_k$ of Liu and Qiao \cite{Liu2026Qiao}, replace
one copy of the ordinary internal graph $W_k$ with $D_{k,r}$ and leave the other
two copies of $W_k$ unchanged. Denote the resulting graph by $G$. By Lemmas
\ref{DHTLem}, \ref{NHLem}, and \ref{SLCLem}, $G$ is a doubly homogeneously traceable nonhamiltonian graph. Moreover, by construction, $G$ is $k$-regular, and a direct count gives $|V(G)|=6(k-1)+q$.

For every integer $p\ge 6$, apply Lemma \ref{BlowLem} repeatedly to vertices
lying in the internal graphs, adding $k-1$ vertices at each step. Starting from
the graph $G$ above and applying this operation $p-6$ times, we obtain a graph
of order
\[
6(k-1)+q+(p-6)(k-1)=p(k-1)+q.
\]
By Lemmas \ref{BlowLem} and \ref{NHLem}, the resulting graph remains
$k$-regular, doubly homogeneously traceable, and non-Hamiltonian. Hence, the proof is complete.
\end{proof}

\begin{corollary}\label{OddCorol}
For every odd integer $k\ge 5$ and every even $n\ge 6(k-1)$, there exists a $k$-regular doubly homogeneously traceable nonhamiltonian graph of order $n$. 
\end{corollary}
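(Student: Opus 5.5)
The plan is to reduce the statement to a residue computation modulo $k-1$, then use Theorem~\ref{LiuThm2026} together with Theorem~\ref{OddThm}. Let $k\ge 5$ be odd and let $n\ge 6(k-1)$ be even. Write $n=p(k-1)+q$ with $0\le q\le k-2$. Since $n\ge 6(k-1)$ and $q<k-1$, we get $p\ge 6$. Because $k$ is odd, $k-1$ is even, so $q\equiv n \pmod 2$ is even. Hence $q\in\{0,2,\ldots,k-3\}$.

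\emph{Case $k\in\{5,7,9\}$.} Here $k-3\le 6$, so $q\in\{0,2,4,6\}$. Theorem~\ref{LiuThm2026}(i) then supplies a $k$-regular homogeneously traceable nonhamiltonian graph of order $n$.

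\emph{Case $k\ge 11$.} If $q\in\{0,2,4,6\}$ we again use Theorem~\ref{LiuThm2026}(i). Otherwise $q\in\{8,10,\ldots,k-3\}$, and Theorem~\ref{OddThm} gives directly a $k$-regular doubly homogeneously traceable nonhamiltonian graph of order $p(k-1)+q=n$.

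The one point needing care, and the main obstacle, is that Theorem~\ref{LiuThm2026} is stated only for \emph{homogeneously} traceable graphs, whereas the corollary asks for \emph{doubly} homogeneously traceable ones. I would close this gap by recalling how Liu and Qiao build their graphs. Their graphs are triangular terminal graphs whose internal graphs (copies of $W_k$, or their variants realizing $q\in\{0,2,4,6\}$) have the SLC property. So Lemma~\ref{DHTLem} makes the base graphs doubly homogeneously traceable, and Lemma~\ref{BlowLem} carries this property through the $p-6$ blow-ups, each of which adds $k-1$ vertices. Nonhamiltonicity is preserved by Lemma~\ref{NHLem}, since the internal graphs remain nonempty. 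If one prefers not to rely on the internal details of \cite{Liu2026Qiao}, an alternative is to check directly, as in the proof of Lemma~\ref{SLCLem}, that their internal graphs are SLC. For $W_k$ this is routine: both $W_k[V_1]$ and $W_k[V_2]$ are complete graphs $K_{k-2}$, hence Hamilton-connected, and removing a vertex keeps them Hamilton-connected. Combining the cases gives the corollary for every odd $k\ge 5$ and every even $n\ge 6(k-1)$.
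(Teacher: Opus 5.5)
Your proposal is correct and follows the paper's proof: write $n=p(k-1)+q$ with $q$ even and $p\ge 6$, then apply Theorem~\ref{LiuThm2026}(i) for $q\in\{0,2,4,6\}$ and Theorem~\ref{OddThm} for $q\in\{8,\ldots,k-3\}$. You also point out that Theorem~\ref{LiuThm2026} is stated only for homogeneously (not doubly homogeneously) traceable graphs, which the paper's one-line proof passes over silently; your fix via the SLC and blow-up framework of Liu and Qiao is the natural one, though for the variants realizing $q\in\{2,4,6\}$ it rests on the cited work, and the template of Lemma~\ref{SLCLem} needs small changes for $W_5$ (it uses $x_3$ and needs $|V_2|\ge 4$).
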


\begin{proof}
By Theorems \ref{LiuThm2026} and  \ref{OddThm}, the statement holds. 
\end{proof}

\subsection{Odd degree $k\ge 7$: all even orders $n\ge 6(k-2)$}\label{Odd7}
In view of Corollary \ref{OddCorol}, it remains to realize the three even orders $6(k-1)-c$ with $c\in\{2,4,6\}$. In this subsection, we first introduce a smaller internal graph, denoted by $S^-(k)$.

Let $V'=\{w_0,w_1,\ldots,w_{2k-7}\}$ be the vertex set of $S^-(k)$. If $k>7$, let $J_1=\{w_iw_{i+j}~|~i=0,1,\ldots,2k-7; j=1,2,\ldots,\frac{k-7}{2}\}$ and $J_2=\{w_iw_{i-j}~|~i=0,1,\ldots,2k-7; j=1,2,\ldots,\frac{k-7}{2}\}$ (where $i-j$ and $i+j$ are taken modulo $2k-6$) be two edge sets. If $k=7$, let $J_1=J_2=\emptyset$. Let $J_3=\{w_iw_{i+k-3}~|~i=0,1,\ldots,k-4\}$ and  $J_4=\{w_0w_{\frac{k-5}{2}}\}$ be edge sets. 

Let $R^-$ be a graph with vertex set $V'$ and edge set $\cup_{j=1}^4 J_i$. Define $S^-(k)=\overline{R^-}$. Let $A=\{w_0,w_1,\ldots,w_{k-3}\}$ and $B=\{w_0,w_{\frac{k-5}{2}}\}\cup\{w_{k-2},w_{k-1},\ldots,w_{2k-7}\}$ be two outer sets of $S^-(k)$. By construction, $d_{S^-(k)}(w_0)=d_{S^-(k)}(w_{\frac{k-5}{2}})=k-2$ and $d_{S^-(k)}(w)=k-1$ for every vertex $w\in V'\setminus\{w_0,w_{\frac{k-5}{2}}\}$.

\begin{lemma}\label{OSLCLem}
For every odd $k\ge 7$, suppose that $S^-(k)$ is placed  between $u$ and $u'$ in a triangular terminal graph. Then $S^-(k)$ has the SLC property. 
\end{lemma}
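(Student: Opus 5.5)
Throughout, write $S=S^-(k)$ and $N=|V'|=2k-6$. The plan is to run the same scheme as in Lemma \ref{SLCLem}, but with Hamilton-connectivity coming from degree sums in one dense graph rather than two. By the degree count after the definition, $\delta(S)\ge k-2$. Hence any two nonadjacent vertices satisfy
\[
d_S(u)+d_S(v)\ge 2k-4=N+2\ge N+1 .
\]
So Theorem \ref{ErdosThm} makes $S$ Hamilton-connected.

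The key step is to show that $S-v$ is also Hamilton-connected for every $v\in V'$. Here $S-v$ has order $2k-7$. Its minimum degree is at least $k-3$, so any nonadjacent pair has degree sum at least $2k-6=(2k-7)+1$. Theorem \ref{ErdosThm} therefore applies again, which is exactly tight.

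I will check this threshold carefully. The worry is a pair $u,w$ whose degrees both drop when $v$ is deleted. One degree is at least $k-2$ after deleting $v$ only if it started at $k-1$ and $v$ was not its neighbour, so the only risky case is when both $u$ and $w$ are adjacent to $v$. In that case, either their sum is $(k-2)+(k-2)=2k-4\ge 2k-6$, which is fine, or one of them is $w_0$ or $w_{(k-5)/2}$ with degree $k-3$. Even then the sum is at least $(k-3)+(k-3)=2k-6$. So the bound holds. (Note that $w_0w_{(k-5)/2}\in J_4$ is a non-edge of $S$, but this pair also has degree sum at least $2k-6$.)

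Given these two Hamilton-connectivity facts, I build the required paths.

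\textbf{(A1).} I need two spanning $(u,u')$-paths with distinct first edges. Pick distinct $a_1,a_2\in A$ and some $b\in B\setminus\{a_1,a_2\}$; this is possible since $|A|=k-2\ge5$ and $|B|=k-3$. Take Hamilton paths $P_j$ of $S$ from $a_j$ to $b$. Then $ua_jP_jbu'$ for $j=1,2$ are the two paths.

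\textbf{(A2).} This is symmetric, using two vertices of $B$ and one vertex of $A$ outside them.

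\textbf{(A3).} For $v\in V'$, choose two distinct neighbours $z_1,z_2$ of $v$ in $S$, which exist since $\delta\ge k-2\ge5$. Choose $a\in A\setminus\{v,z_1,z_2\}$, which exists since $|A|\ge5$. Since $S-v$ is Hamilton-connected, there is a Hamilton path $P'_j$ of $S-v$ from $z_j$ to $a$. Then $vz_jP'_jau$ for $j=1,2$ are spanning $(v,u)$-paths through $S$ with distinct initial edges.

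The main obstacle is the tight verification that $S^-(k)-v$ meets the Erd\H{o}s–Gallai bound, together with the exact degrees: that $R^-$ is $(k-6)$-regular apart from the two $J_4$ endpoints. This regularity needs checking directly. The pairs $J_1\cup J_2$ give $k-7$ per vertex, since the circulant offsets up to $(k-7)/2$ are distinct modulo $2k-6$. $J_3$ is a perfect matching on the $2k-6$ vertices. $J_4$ must not duplicate any edge of $J_1$, $J_2$ or $J_3$; this holds because the distance $(k-5)/2$ exceeds $(k-7)/2$ and is not $k-3$. I also need to confirm the degree statement in the paper, and should it fail I would fall back to choosing the endpoints $a,b$ in (A1)–(A3) to avoid $w_0$ and $w_{(k-5)/2}$.
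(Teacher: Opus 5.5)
Your proposal is correct and is essentially the paper's proof. Both apply Theorem~\ref{ErdosThm} to $S^-(k)$ and to $S^-(k)-v$, using $\delta(S^-(k)-v)\ge k-3$ and $2(k-3)=(2k-7)+1$, and then extend Hamilton paths by edges to $u$ or $u'$; the only difference is that in \textbf{(A1)} the paper uses the concrete endpoints $w_1,w_2$ and $w_0\in A\cap B$.

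One harmless slip: $|B|=k-2$, not $k-3$. The set $B$ consists of $w_0$, $w_{\frac{k-5}{2}}$ and the $k-4$ vertices $w_{k-2},\ldots,w_{2k-7}$, and this count is also what gives $u'$ degree $k$.
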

\begin{proof}
 The proof of $S^-(k)$ is Hamilton-connected is easy (by Theorem \ref{ErdosThm}), so we omit the proof. For any
vertex $v\in V(S^-(k))$, we have $\delta(S^-(k)-v)\ge k-3$ and $|V(S^-(k)-v)|=2k-7$. Since $2(k-3)=(2k-7)+1$, Theorem \ref{ErdosThm} implies that $S^-(k)-v$ is also Hamilton-connected.

Since $S^-(k)$ is Hamilton-connected, for each $j\in\{1,2\}$ there exists a
Hamilton path $P_j$ in $S^-(k)$ from $w_j$ to $w_0$. Hence
\[
uw_1P_1w_0u'
\quad\text{and}\quad
uw_2P_2w_0u'
\]
are two spanning $(u,u')$-paths through $S^-(k)$ with distinct initial edges.
Therefore, $S^-(k)$ satisfies \textbf{(A1)}. By symmetry, $S^-(k)$ also
satisfies \textbf{(A2)}.

It remains to verify \textbf{(A3)}. Let $v\in V(S^-(k))$. Since
$\delta(S^-(k))=k-2\ge 5$, we may choose two distinct neighbors
$z_1,z_2$ of $v$ in $S^-(k)$. Since $|A|=k-2\ge 5$, there exists a vertex $z\in A\setminus\{v,z_1,z_2\}.$
By the Hamilton-connectedness of $S^-(k)-v$, for each $j\in\{1,2\}$ there
exists a Hamilton path $P_j'$ in $S^-(k)-v$ from $z_j$ to $z$. Consequently,
\[
vz_1P_1'zu
\quad\text{and}\quad
vz_2P_2'zu
\]
are two spanning $(v,u)$-paths through $S^-(k)$ with distinct initial edges.
Thus, $S^-(k)$ satisfies \textbf{(A3)}.
\end{proof}

We now prove Theorem \ref{MainResult} for odd degrees $k\ge 7$. 

\begin{MainThmproof1}
Note that $|V(G_k)|=6(k-1)$, $|V(W_k)|=2(k-2)$ and $|V(S^-(k))|$ $=|V(W_k)|-2$. 
Let $c\in\{2,4,6\}$. Starting from the base graph $G_k$ of Liu and Qiao \cite{Liu2026Qiao}, replace exactly $\frac{c}{2}$ copies of the ordinary internal graph $W_k$ with copies of $S^-(k)$, leaving the remaining internal graphs unchanged.
Denote the resulting graph by $G$. Since each such replacement reduces the
order by $2$, we have $|V(G)|=6(k-1)-c.$ Moreover, by construction, $G$ is $k$-regular. By Lemmas \ref{DHTLem},
\ref{NHLem}, and \ref{OSLCLem}, $G$ is a doubly homogeneously
traceable nonhamiltonian graph. Combining this with Corollary \ref{OddCorol},
we obtain the desired conclusion.
\end{MainThmproof1}

\section{The even-degree case of Theorem \ref{MainResult}}
In Subsection \ref{Even10}, we settle all residue classes not covered by Theorem \ref{LiuThm2026} for every even degree $k$. In Subsection \ref{Even6}, we reduce the order threshold in Corollary \ref{LiuCorol2026} to $6(k-2)$ for every even degree $k$.

\subsection{Even degree $k\ge 10$: all orders $n\ge 6(k-1)$}\label{Even10}
In this subsection, we first introduce an internal graph $F_{k,r}$ to settle the remaining residue classes for every even integer $k\ge 10$. Let $t=\lfloor\frac{k}{2}\rfloor$ and choose an integer $3\le r\le t-1$. Let $L=k-2+r=2t-2+r$. Let $(V_1',V_2')$  be a vertex partition of $V(F_{k,r})$, where $V_1'=\{a_0,\ldots,a_{L-1}\}$ and $V_2'=\{b_0,\ldots,b_{L-1}\}$.  Let $A=\{a_0,\ldots,a_{k-3}\}$ and $B=\{b_0,\ldots,b_{k-3}\}$ be two outer sets of $F_{k,r}$. 

We first construct a graph $G_1$ on $V_1'$ and then define $F_{k,r}[V_1']=\overline{G_1}$. Suppose that $r$ is even. Define $A_1=\{a_ia_{i+j}~|~i=0,1,\ldots,L-1; j=1,2,\ldots,\frac{r}{2}\}$ and $A_2=\{a_ia_{i-j}~|~i=0,1,\ldots,L-1; j=1,2,\ldots,\frac{r}{2}\}$ (where $i-j$ and $i+j$ are taken modulo $L$). Let $G_1$ be a graph with vertex set $V_1'$ and edge set $A_1\cup A_2$. Then $G_1$ is $r$-regular, and hence
$F_{k,r}[V_1']$ is $(k-3)$-regular. Applying the same construction to $V_2'$
yields $F_{k,r}[V_2']$.

Now suppose that $r$ is odd. Let
\begin{equation}
\sigma=\begin{cases}
k-1, & \text{if } r=3;\\
L-1, & \text{if } r\ge 5.
\end{cases}
\end{equation}
Let $E_1'=\{a_ia_{i+ j}~|~i=0,1,\ldots,L-1; j=1,2,\ldots,\frac{r-1}{2}\}$ and $E_2'=\{a_ia_{i- j}~|~i=0,1,\ldots,L-1; j=1,2,\ldots,\frac{r-1}{2}\}$  (where $i-j$ and $i+j$ are taken modulo $L$). Let $H$ be the graph with vertex set $V_1'$ and edge set $E_1'\cup E_2'$, and put
$H'=\overline{H-a_{\sigma}}$. Since $H$ is $(r-1)$-regular, we have
$|V(H')|=L-1$ and $\delta(H')=L-1-r$. We claim that $H'$ contains a perfect
matching. Indeed, since $r\le t-1$, we have
\[
r\le \frac{k-2}{2}=\frac{L-r}{2}\le \frac{L-1}{2}.
\]
It follows that
\[
\delta(H')=L-1-r\ge \frac{L-1}{2}.
\]
By Dirac's theorem \cite{Dirac1952}, $H'$ contains a Hamilton cycle $C'$.
Since $L-1$ is even, $C'$ contains a perfect matching, which is also a perfect
matching of $H'$. Fix such a perfect matching and denote its edge set by $E_3'$.
Define $G_1$ on $V_1'$ by  $E(G_1)=\cup_{i=1}^3 E_i'.$
Then $d_{G_1}(a_{\sigma})=r-1$ and $d_{G_1}(a)=r$
for every $a\in V_1'\setminus\{a_{\sigma}\}$. Since
$F_{k,r}[V_1']=\overline{G_1}$, we obtain
\[
d_{F_{k,r}[V_1']}(a_{\sigma})=k-2
\quad\text{and}\quad
d_{F_{k,r}[V_1']}(a)=k-3
\]
for every $a\in V_1'\setminus\{a_{\sigma}\}$. Applying the same construction to
$V_2'$ yields $F_{k,r}[V_2']$.

It remains to define the edge set $E_{F_{k,r}}(V_1',V_2')$. Let $B_1=\{a_ib_i~|~i=0,1,\ldots,L-1\}$, $B_2=\{a_ib_{i+1}~|~i=0,1,\ldots,L-1\}$ (where $i+1$ is taken modulo $L$) and $I=\{k-2,k-1,\ldots,L-1\}$. If $r$ is even, let $I_0=I$. If $r$ is odd, let $I_0=I\setminus\{\sigma\}$. Note that there exists a permutation $\tau$ of $I_0$ satisfying $\tau(j)\not\equiv j$ and $\tau(j)\not\equiv j+1$ (mod $L$) for all $j\in I_0$. Define $B_3=\{a_ib_{\tau(i)}~|~$every$~i\in I_0\}$. Finally, set $E_{F_{k,r}}(V_1',V_2')=\cup_{i=1}^3 B_i$.

\begin{lemma}\label{FSLCLem}
If $F_{k,r}$ is placed between $u$ and $u'$ in a triangular terminal graph, then $F_{k,r}$ has the $SLC$ property. 
\end{lemma}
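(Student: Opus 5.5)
I would follow the template of the proof of Lemma \ref{SLCLem} almost verbatim. Set $F_1=F_{k,r}[V_1']$ and $F_2=F_{k,r}[V_2']$. The first step is to show that $F_i$ and $F_i-v$ are Hamilton-connected for every $v\in V(F_i)$, using Theorem \ref{ErdosThm}.

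Each $F_i$ has $L=k-2+r$ vertices and minimum degree at least $k-3$. Hence $F_i-v$ has $L-1$ vertices and minimum degree at least $k-4$. It therefore suffices to check
\[
2(k-4)\ge (L-1)+1=k-2+r,
\]
that is, $r\le k-6$. Since $r\le t-1=\frac{k}{2}-1$ and $k\ge 10$, we have $\frac{k}{2}-1\le k-6$, so this holds. The condition for $F_i$ itself, $2(k-3)\ge L+1$, is then automatic. This degree count is where I expect the only real care to be needed. For odd $r$, the vertex $a_\sigma$ has degree $k-2$, which only helps. Removing $a_\sigma$ or one of its neighbours still leaves minimum degree at least $k-4$, so the same inequality applies.

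For \textbf{(A1)}, I would use two distinct vertices $a_1,a_2\in A$ and the edge $a_3b_3\in B_1$. Hamilton-connectedness of $F_1$ gives Hamilton paths $P_j$ from $a_j$ to $a_3$ for $j=1,2$. Hamilton-connectedness of $F_2$ gives a Hamilton path $P_3$ from $b_3$ to $b_0\in B$. The two spanning paths are then $ua_jP_ja_3b_3P_3b_0u'$ for $j=1,2$, which have distinct initial edges. Here $3\le k-3$ because $k\ge 10$. Condition \textbf{(A2)} follows by the symmetric construction, interchanging the roles of $V_1'$ and $V_2'$.

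For \textbf{(A3)}, I would treat $v\in V_1'$ and $v\in V_2'$ separately.

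If $v\in V_1'$, then $v$ has at least $2$ neighbours $z_1,z_2\in V_2'$, namely via $B_1$ and $B_2$, and these are distinct since $L\ge 2$. Pick an edge $v_1v_2$ between $V_1'\setminus\{v\}$ and $V_2'\setminus\{z_1,z_2\}$; this is possible because $B_1$ alone is a perfect matching on $L\ge 10$ pairs. Pick $v_3\in A\setminus\{v,v_1\}$. Let $P_j'$ be a Hamilton path of $F_2$ from $z_j$ to $v_2$, and let $Q'$ be a Hamilton path of $F_1-v$ from $v_1$ to $v_3$. The required paths are $vz_jP_j'v_2v_1Q'v_3u$ for $j=1,2$.

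If $v\in V_2'$, take two neighbours $w_1,w_2$ of $v$ inside $F_2$, which exist since the degree is at least $k-3\ge 2$. Take an edge $s_1s_2$ with $s_1\in V_1'$ and $s_2\in V_2'\setminus\{v,w_1,w_2\}$. Take $s_3\in A\setminus\{s_1\}$. Let $P_j''$ be a Hamilton path of $F_2-v$ from $w_j$ to $s_2$, and let $Q''$ be a Hamilton path of $F_1$ from $s_1$ to $s_3$. The required paths are $vw_jP_j''s_2s_1Q''s_3u$ for $j=1,2$.

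The remaining bookkeeping is to confirm that the chosen endpoints are distinct, so that Hamilton-connectedness actually applies. This is immediate from $|A|=k-2\ge 8$ and from the matching $B_1$ giving many disjoint cross edges.
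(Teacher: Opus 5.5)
Your proposal is correct and follows the paper's proof. Like the paper, you get Hamilton-connectedness of $F_i$ and $F_i-v$ from Theorem \ref{ErdosThm} and then reuse the path constructions of Lemma \ref{SLCLem}, which you spell out and the paper only cites by reference. Your single check $r\le k-6$ (from $r\le \frac{k}{2}-1$ and $k\ge 10$) is equivalent to the paper's inequality $2(L-r-2)\ge L$, and it avoids the paper's separate treatment of $r=3$ and $r\ge 4$.
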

\begin{proof}
For each $i\in\{1,2\}$, set $F_i=F_{k,r}[V_i']$. Then
$|V(F_i)|=L$. If $r$ is even, then $F_i$ is $(k-3)$-regular. If
$r$ is odd, then, by construction, exactly one vertex of $F_i$ has degree
$k-2$, while every other vertex has degree $k-3$. More precisely, in $F_1$
we have $d_{F_1}(a_{\sigma})=k-2$ and $d_{F_1}(a)=k-3$ for every $a\in V_1'\setminus\{a_{\sigma}\}$, and the analogous statement
holds for $F_2$. Hence, in all cases, $\delta(F_i)=k-3$. Recall that $t=\lfloor k/2\rfloor\ge 5$. Since $r\le t-1$, we have $r\le \frac{k-2}{2}=\frac{L-r}{2}\le \frac{L-3}{2}$. It follows that
\[
2(k-3)=2(L-r-1)\ge L+1.
\]
Thus, by Theorem \ref{ErdosThm}, each $F_i$ is Hamilton-connected.

We next show that $F_i-v$ is Hamilton-connected for every
$v\in V(F_i)$. Since $k\ge 10$, if $r=3$, then $r\le \frac{k-2}{2}-1\le \frac{L-5}{2}.$ If $r\ge 4$, then $r\le \frac{k-2}{2}=\frac{L-r}{2}\le \frac{L-4}{2}.$ Moreover, $\delta(F_i-v)\ge k-4=L-r-2.$  Therefore,
\[
2\delta(F_i-v)\ge 2(L-r-2)\ge L.
\]
Since $|V(F_i-v)|=L-1$, Theorem \ref{ErdosThm} implies that $F_i-v$ is
Hamilton-connected.

Applying the same argument as in the proof of Lemma \ref{SLCLem}, we conclude
that $F_{k,r}$ has the \emph{SLC} property.
\end{proof}

\begin{theorem}\label{EvenThm}
For every even integer $k\ge 10$, integer $p\ge 6$ and residue $q\in\{7,8,\ldots,k-2\}$, there exists a $k$-regular doubly homogeneously traceable nonhamiltonian graph of order $p(k-1)+q$. 
\end{theorem}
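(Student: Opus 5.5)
The plan mirrors the proof of Theorem \ref{OddThm}. For $q\in\{7,\ldots,k-2\}$, write $q=2r$ if $q$ is even and $q=2r-1$ if $q$ is odd. The intention is that replacing one copy of $W_k$ by $F_{k,r}$ adds exactly $q$ vertices. Since $|V(W_k)|=2(k-2)$ and $|V(F_{k,r})|=2L=2(k-2)+2r$, the even case is exact, and then $4\le r\le (k-2)/2=t-1$.

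The odd case needs care. $F_{k,r}$ as defined always has $2L$ vertices, so for odd $q=2r-1$ one replacement of $W_k$ by $F_{k,r}$ gives $2r$ extra vertices, not $2r-1$. My plan for odd $q$ is therefore to combine the odd-$r$ version of $F_{k,r}$ with the reconstruction $W_k'$. $W_k'$ has $2(k-2)+1$ vertices and contributes $+1$, while $F_{k,r'}$ with $r'=(q-1)/2$ contributes $q-1$. We need $3\le r'\le t-1$, which holds since $7\le q\le k-3$ for odd $q$ (as $k$ is even). So the base graph $G$ uses $F_{k,r'}$ in one slot, $W_k'$ in another, and $W_k$ in the third. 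This has order $6(k-1)+q$.

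The key checks, in order, are as follows.
\begin{enumerate}[(i)]
\item $G$ is $k$-regular. The terminal vertices $u_i,u_i'$ have degree $2+|A|=k$ because the outer sets have size $k-2$. In $F_{k,r}$, each vertex of $A$ has degree $(k-3)+2+1=k$ (two bipartite edges from $B_1\cup B_2$ plus the terminal edge), and each vertex with index in $I_0$ has degree $(k-3)+3=k$ using $B_3$. In the odd-$r$ case, $a_\sigma$ has degree $(k-2)+2=k$, which is exactly why $\sigma$ is excluded from $I_0$. For $W_k'$ a similar count is needed: $z$ has degree $k$, the vertices $x_i,y_i$ with $i<k/2$ have degree $(k-3)+1+1+1=k$, and the remaining ones get their third cross edge from $T_3\cup T_4$.
\item Each internal graph has the SLC property. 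For $F_{k,r}$ this is Lemma \ref{FSLCLem}, and for $W_k$ it is known from Liu and Qiao.
\item $G$ is doubly homogeneously traceable and nonhamiltonian, by Lemmas \ref{DHTLem} and \ref{NHLem}.
\item Apply Lemma \ref{BlowLem} $p-6$ times to internal vertices of degree $k$. Each application adds $k-1$ vertices and preserves $k$-regularity, and the resulting graph keeps the triangular terminal structure with nonempty internal graphs, so Lemma \ref{NHLem} still applies.
\end{enumerate}

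The main obstacle is establishing SLC for $W_k'$, which has not been proved above. I would argue it as in Lemma \ref{SLCLem}. Both $W_k'[V_1]$ and $W_k'[V_2]$ are complete graphs $K_{k-2}$, hence Hamilton-connected even after deleting a vertex. For (A1), start at $u$, enter $V_1$ at one of two different vertices, traverse $V_1$ ending at $x_0$, go to $z$, then to $y_0$, traverse $V_2$, and exit at some $y_j\in B$. For (A3), handle the three cases $v=z$, $v\in V_1$ and $v\in V_2$ separately, again routing through $z$ via one of its neighbours together with a cross edge $x_iy_i$.

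If instead the paper intends odd $q$ to be realized with $F_{k,r}$ alone, the counting must be redone. Either way, the regularity bookkeeping and the SLC verification for the extra gadget are the delicate steps.
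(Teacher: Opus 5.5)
Your proposal is correct and follows the paper's proof: even $q=2r$ replaces one copy of $W_k$ by $F_{k,r}$; odd $q=2r+1$ (with $3\le r\le t-2$) uses $F_{k,r}$, $W_k'$ and one remaining $W_k$; the conclusion then follows from Lemmas \ref{DHTLem}, \ref{NHLem}, \ref{FSLCLem} and $p-6$ blow-ups via Lemma \ref{BlowLem}. The only difference is that the paper cites Liu and Qiao for the SLC property of $W_k'$, whereas you sketch it directly using that both sides of $W_k'$ induce $K_{k-2}$ and routing through $z$; that direct argument also works.
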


\begin{proof}
Suppose first that $q$ is even. Let $q=2r$. Then $4\le r\le \frac{k-2}{2}=t-1$. Starting from the base graph $G_k$ of Liu and Qiao \cite{Liu2026Qiao}, replace one copy of the ordinary internal graph $W_k$ with $F_{k,r}$ and leave the
other two copies of $W_k$ unchanged. Denote the resulting graph by $G$. By
Lemmas \ref{DHTLem}, \ref{NHLem}, and \ref{FSLCLem}, we have $G$ is a doubly homogeneously traceable nonhamiltonian graph. Moreover, by construction, $G$ is
$k$-regular, and a direct count gives $|V(G)|=6(k-1)+q.$

Now suppose that $q$ is odd. Let $q=2r+1$. Then $3\le r\le \frac{k-4}{2}=t-2$. Starting from the base graph $G_k$ of Liu and Qiao \cite{Liu2026Qiao}, replace
two copies of the ordinary internal graph $W_k$ with $F_{k,r}$ and $W_k'$,
respectively, and leave the remaining copy of $W_k$ unchanged. Denote the
resulting graph by $G'$. Suppose that $W_k'$ is placed between $u_1$ and
$u_1'$ as an internal graph. Liu and Qiao \cite{Liu2026Qiao} proved that
$W_k'$ has the \emph{SLC} property. Hence, by Lemmas \ref{DHTLem},
\ref{NHLem}, and \ref{FSLCLem}, we have $G'$ is a doubly homogeneously traceable nonhamiltonian graph. Moreover, by construction, $G'$ is $k$-regular,
and a direct count gives $|V(G')|=6(k-1)+q.$

Finally, for every integer $p\ge 6$, apply Lemma \ref{BlowLem} repeatedly to
vertices lying in the internal graphs, adding $k-1$ vertices at each step.
Starting from $G$ in the even case and from $G'$ in the odd case, and applying
this operation $p-6$ times, we obtain a graph of order
\[
6(k-1)+q+(p-6)(k-1)=p(k-1)+q.
\]
By Lemmas \ref{BlowLem} and \ref{NHLem}, the resulting graph remains
$k$-regular, doubly homogeneously traceable, and nonhamiltonian. Therefore,
there exists a $k$-regular doubly homogeneously traceable nonhamiltonian graph
of order $p(k-1)+q$.
\end{proof}

\begin{corollary}\label{EvenCorol}
For every even integer $k\ge 6$ and every $n\ge 6(k-1)$, there exists a $k$-regular doubly homogeneously traceable nonhamiltonian graph of order $n$. 
\end{corollary}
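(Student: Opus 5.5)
This follows the pattern of Corollary \ref{OddCorol}: combine the residue classes covered by Theorem \ref{LiuThm2026}(ii) with those covered by Theorem \ref{EvenThm}. Write $n=p(k-1)+q$ with $p\ge 6$ and $0\le q\le k-2$; every $n\ge 6(k-1)$ has such a representation by division with remainder.

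\textbf{Case $k\in\{6,8\}$.} Here $k-2\le 6$, so every residue $q\in\{0,\ldots,k-2\}$ lies in $\{0,1,\ldots,6\}$. Theorem \ref{LiuThm2026}(ii) then applies directly.

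\textbf{Case $k\ge 10$.} Residues $q\in\{0,\ldots,6\}$ are handled by Theorem \ref{LiuThm2026}(ii). Residues $q\in\{7,\ldots,k-2\}$ are handled by Theorem \ref{EvenThm}.

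The one delicate point is that the corollary asserts \emph{doubly} homogeneously traceable, while Theorem \ref{LiuThm2026} as quoted only gives homogeneously traceable. To close this gap I would note that Liu and Qiao's graphs are built exactly as in the proofs above. They start from $G_k$, or from $G_k$ with some copies of $W_k$ replaced by $W_k'$ or similar SLC internal graphs. By Lemma \ref{DHTLem} these starting graphs are doubly homogeneously traceable, and by Lemma \ref{BlowLem} blowing up preserves that property. So their graphs are in fact doubly homogeneously traceable.

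The same observation was used implicitly in Corollary \ref{OddCorol}. I would state it explicitly, appealing to the construction in \cite{Liu2026Qiao}, and this is the only step needing care. The remaining steps are pure bookkeeping of residues.
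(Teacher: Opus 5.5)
Your proposal is correct and matches the paper's argument: the paper's proof is simply the citation of Theorems \ref{LiuThm2026}(ii) and \ref{EvenThm}, with the residue bookkeeping modulo $k-1$ left implicit. The paper does not address the fact that Theorem \ref{LiuThm2026}, as quoted, gives only homogeneous traceability. Your explicit remark that Liu and Qiao's graphs come from SLC internal graphs via Lemmas \ref{DHTLem} and \ref{BlowLem}, and are therefore doubly homogeneously traceable, fills that gap. That remark depends on the details of their construction rather than on the theorem as stated, which is the same dependence the paper's proof has.
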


\begin{proof}
By Theorems \ref{LiuThm2026} and  \ref{EvenThm}, the statement holds. 
\end{proof}

\subsection{Even degree $k\ge 6$: all orders $n\ge 6(k-2)$}\label{Even6}
In view of Corollary \ref{EvenCorol}, it remains to realize the orders $6(k-1)-c$ for $1\le c\le 6$. In this subsection, we introduce two smaller
internal graphs, denoted by $S_1(k)$ and $S_2(k)$.

Let $N=\{v_0,v_1,\ldots,v_{2k-6}\}$ be the vertex set of $S_1(k)$. Let $J_1=\{v_iv_{i+j}~|~i=0,1,\ldots,2k-6; j=1,2,\ldots,\frac{k-4}{2}\}$,  $J_2=\{v_iv_{i- j}~|~i=0,1,\ldots,2k-6; j=1,2,\ldots,\frac{k-4}{2}\}$ (where $i-j$ and $i+j$ are taken modulo $2k-5$) and $J_3=\{v_{2i-1}v_{2i}~|~i=1,2,\ldots,k-3\}$. Let $R_1$ be a graph with vertex set $N$ and edge set $(J_1\cup J_2)\setminus J_3$. Define $S_1(k)=\overline{R_1}$. Let $A_1=\{v_0,v_1,\ldots,v_{k-3}\}$ and $B_1=\{v_0\}\cup\{v_{k-2},v_{k-1},\ldots,v_{2k-6}\}$ be two outer sets of $S_1(k)$. Note that $d_{S_1(k)}(v_0)=k-2$ and $d_{S_1(k)}(v)=k-1$ for every vertex $v\in N\setminus\{v_0\}$.

 Let $M=\{z_0,z_1,\ldots,z_{2k-7}\}$ be the vertex set of $S_2(k)$. If $k>6$, let $D_1=\{z_iz_{i+j}~|~i=0,1,\ldots,2k-7; j=1,2,\ldots,\frac{k-6}{2}\}$ and  $D_2=\{z_iz_{i-j}~|~i=0,1,\ldots,2k-7; j=1,2,\ldots,\frac{k-6}{2}\}$ (where $i-j$ and $i+j$ are taken modulo $2k-6$). If $k=6$, let $D_1=D_2=\emptyset$. Let $D_3=\{z_0z_{k-3}\}$. Let $R_2$ be a graph with vertex set $M$ and edge set $\cup_{i=1}^3D_i$. Define $S_2(k)=\overline{R_2}$. Let $A_2=\{z_0,z_1,\ldots,z_{k-3}\}$ and $B_2=\{z_0\}\cup\{z_{k-3},z_{k-2},\ldots,z_{2k-7}\}$ be two outer sets of $S_2(k)$. Note that $d_{S_2(k)}(z_0)=d_{S_2(k)}(z_{k-3})=k-2$ and $d_{S_2(k)}(z)=k-1$ for every vertex $z\in M\setminus\{z_0,z_{k-3}\}$.

\begin{lemma}\label{ESLCLem}
For every even integer $k\ge 6$ and every $F\in\{S_1(k),S_2(k)\}$, if $F$ is placed between the vertices $u$ and $u'$ as an internal graph in a triangular terminal graph, then $F$ has the SLC property.
\end{lemma}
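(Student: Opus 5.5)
The plan is to copy the argument of Lemma \ref{OSLCLem}. First we show that both $F$ and $F-v$ (for every $v\in V(F)$) are Hamilton-connected, using Theorem \ref{ErdosThm}. Then we build the required spanning paths by attaching $u$ or $u'$ to suitable endpoints in the outer sets. The degree data recorded before the lemma are what we use: in $S_1(k)$ only $v_0$ has degree $k-2$ and all others have degree $k-1$; in $S_2(k)$ only $z_0,z_{k-3}$ have degree $k-2$ and all others have degree $k-1$.

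\textbf{Hamilton-connectedness.} For $S_1(k)$, the order is $2k-5$ and the minimum degree is $k-2$, so any two nonadjacent vertices have degree sum at least $2k-4=(2k-5)+1$. Hence $S_1(k)$ is Hamilton-connected by Theorem \ref{ErdosThm}.

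For $S_1(k)-v$ the order is $2k-6$ and the naive bound $2(k-3)$ is too weak, so we use that degrees drop by at most one. Every vertex other than $v_0$ keeps degree at least $k-2$ in $S_1(k)-v$, and $v_0$ keeps degree at least $k-3$. So any nonadjacent pair has degree sum at least $(k-3)+(k-2)=2k-5=(2k-6)+1$.

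For $S_2(k)$, the order is $2k-6$ and the minimum degree is $k-2$, giving sum at least $2k-4\ge 2k-5$. For $S_2(k)-v$, the order is $2k-7$ and the minimum degree is at least $k-3$, giving sum at least $2k-6=(2k-7)+1$. In every case Theorem \ref{ErdosThm} applies; the orders are at least $3$ because $k\ge 6$.

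\textbf{(A1) and (A2).} Let $A\in\{A_1,A_2\}$ and $B\in\{B_1,B_2\}$ be the outer sets, and fix the common vertex $w\in A\cap B$ ($v_0$ or $z_0$). Since $|A|=k-2\ge 4$, pick two distinct $a_1,a_2\in A\setminus\{w\}$. Hamilton-connectedness gives Hamilton paths $P_j$ in $F$ from $a_j$ to $w$. Then $ua_1P_1wu'$ and $ua_2P_2wu'$ are spanning $(u,u')$-paths with distinct initial edges, which gives (A1). Symmetrically, choosing two vertices of $B\setminus\{w\}$ (this set has at least $k-3\ge 3$ elements) and ending at $w$ gives (A2).

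\textbf{(A3).} For $v\in V(F)$, choose two distinct neighbours $z_1,z_2$ of $v$; this is possible since $\delta(F)\ge k-2\ge 4$. Choose $z\in A\setminus\{v,z_1,z_2\}$, which exists since $|A|\ge 4$. Hamilton paths $P_j'$ in $F-v$ from $z_j$ to $z$ then yield $vz_1P_1'zu$ and $vz_2P_2'zu$.

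The main obstacle is the Hamilton-connectedness of $F-v$, since the Erdős–Gallai count is tight there. This is exactly why the argument must use that at most one or two vertices have degree $k-2$, rather than a uniform minimum-degree bound.
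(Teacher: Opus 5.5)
Your proposal is correct and follows essentially the same route as the paper. Erd\H{o}s--Gallai gives Hamilton-connectedness of $F$ and of $F-v$, with the same tight counts $(k-3)+(k-2)=(2k-6)+1$ for $S_1(k)-v$ and $2(k-3)=(2k-7)+1$ for $S_2(k)-v$; the paths for (A1)--(A3) are then built by attaching $u$ and $u'$ to endpoints in the outer sets exactly as in Lemma~\ref{OSLCLem}. You also verify the Hamilton-connectedness of $F$ itself, which the paper omits. One small correction to your closing remark: the refined degree count is only needed for $S_1(k)-v$, since for $S_2(k)-v$ the uniform bound $\delta\ge k-3$ already suffices, as your own computation shows.
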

\begin{proof}
 The proof of $F$ is Hamilton-connected is easy (by Theorem \ref{ErdosThm}), so we omit the proof. Let $x$ be any vertex of $V(F)$. We claim that $F-x$ is also Hamilton-connected. If $F=S_1(k)$, then $(k-2)+(k-1)-2\ge (2k-6)+1$. If $F=S_2(k)$, then $2(k-2-1)\ge (2k-7)+1$. By Theorem \ref{ErdosThm}, $F-x$ is Hamilton-connected. 

First suppose that $F=S_1(k)$. Since $F$ is Hamilton-connected, for each
$j\in\{1,2\}$ there exists a Hamilton path $P_j$ in $F$ from $v_j$ to $v_0$.
Hence
\[
uv_1P_1v_0u'
\quad\text{and}\quad
uv_2P_2v_0u'
\]
are two spanning $(u,u')$-paths through $F$ with distinct initial edges. Thus,
$F$ satisfies \textbf{(A1)}. If $F=S_2(k)$, the same argument shows that $F$
satisfies \textbf{(A1)}. By symmetry, $F$ also satisfies \textbf{(A2)}.

It remains to verify \textbf{(A3)}. Let $v\in V(F)$. First suppose that
$F=S_1(k)$. Choose two distinct neighbors $w_1,w_2\in N_F(v)$. Since
$|A_1|\ge 4$, there exists a vertex $w\in A_1\setminus\{v,w_1,w_2\}.$ As $F-v$ is Hamilton-connected, for each $j\in\{1,2\}$ there exists a
Hamilton path $P_j'$ in $F-v$ from $w_j$ to $w$. Consequently,
\[
vw_1P_1'wu
\quad\text{and}\quad
vw_2P_2'wu
\]
are two spanning $(v,u)$-paths through $F$ with distinct initial edges.
Therefore, $F$ satisfies \textbf{(A3)}. If $F=S_2(k)$, the same argument shows
that $F$ satisfies \textbf{(A3)}.
\end{proof}

We now prove Theorem \ref{MainResult} for even degrees $k\ge 6$.

\begin{MainThmproof2}
Note that $|V(G_k)|=6(k-1)$, $|V(W_k)|=2(k-2)$, $|V(S_1(k))|= |V(W_k)|-1$ and $|V(S_2(k))|= |V(W_k)|-2$. For each $c\in\{1,\ldots,6\}$, starting from the base graph $G_k$ of Liu and
Qiao \cite{Liu2026Qiao}, replace suitable copies of the ordinary internal graph
$W_k$ with copies of $S_1(k)$ or $S_2(k)$ so that the total reduction in order
is $c$. Denote the resulting graph by $G$. Then $|V(G)|=6(k-1)-c.$ Moreover, by construction, $G$ is $k$-regular. By Lemmas \ref{DHTLem},
\ref{NHLem}, and \ref{ESLCLem},  $G$ is a doubly homogeneously traceable nonhamiltonian graph.  Combining this with Corollary \ref{EvenCorol},
we obtain the desired conclusion.
\end{MainThmproof2}

\section*{Declarations}
The authors declare that they have no known competing financial interests or personal relationships that could have appeared to influence the work reported in this paper.
\section*{\bf\Large Availability of Data and Materials}  
Not applicable.
\section*{Acknowledgments}
This research is supported by National Key R\&D Program of China under grant number 2024YFA1013900, NSFC under grant numbers 12471327 and 12401454, Natural Science Foundation of Fujian Province under grant number 2024J01875, Science-Technology Foundation of Putian University under grant number 2023059.


\end{document}